\newcommand{\phib}{\boldsymbol\phib}
\newcommand{\bm}[1]{\mathbf{#1}}
\newcommand{\cD}{\mathcal{D}}
\newcommand{\RR}{\mathbb{R}}
\newcommand{\dx}{\,dx}
\newtheorem{example}{Example}%
\newtheorem{remark}{Remark}%
\title{Averaged Nystr\"om interpolants for the solution of Fredholm
integral equations of the second kind}
\author{
L. Fermo\thanks{Department of Mathematics and Computer Science, University of Cagliari, 
Via Ospedale 72, 09124 Cagliari, Italy, email: fermo@unica.it, rodriguez@unica.it.}
\and
L. Reichel\thanks{Department of Mathematical Sciences, Kent State University, Kent, OH 
44242, USA, email: reichel@math.kent.edu.}
\and
G. Rodriguez\footnotemark[1]
\and 
M. M. Spalevi\'c\thanks{Department of Mathematics, Faculty of Mechanical Engineering, 
University of Belgrade, Kraljice Marije 16, 11120 Belgrade 35, Serbia, email: 
mspalevic@mas.bg.ac.rs.}
}
\begin{document}
\maketitle

\begin{abstract}
Fredholm integral equations of the second kind that are defined on a
finite or infinite interval arise in many applications. This paper discusses
Nystr\"om methods based on Gauss quadrature rules for the solution of such
integral equations. It is important to be able to estimate the error in the
computed solution, because this allows the choice of an appropriate number of
nodes in the Gauss quadrature rule used. This paper explores the application of
averaged and weighted averaged Gauss quadrature rules for this purpose, and
introduces new stability properties for them.

\end{abstract}

\begin{keywords}
Fredholm integral equations of the second kind, Gauss quadrature
rule, Averaged quadrature rule, Nystr\"om method
\end{keywords}

\begin{AMS}
65R20, 65D32
\end{AMS}

\section{Introduction}\label{sec:introduction}
Fredholm integral equations of the second kind,
\begin{equation}\label{Fredholm}
f(y)+\int_\cD k(x,y) f(x) d \mu(x)=g(y), \quad y \in \cD,
\end{equation}
where the kernel $k$ and right-hand side function $g$ are given, the function $f$ is to be
determined, and  $d \mu(x)$ is a nonnegative measure supported on a bounded or 
unbounded domain, 
arise in many applications including image restoration
(when applying Tikhonov regularization)~\cite{BucciniDonatelli}, conformal
mapping~\cite{Nasser}, frequency analysis~\cite{NRS}, and tomography~\cite{RamlauStadler}; see 
also Atkinson~\cite{A} and Kress~\cite{Kress} for discussions on further applications. The 
present paper considers equations of the form \eqref{Fredholm} when $\cD$ is a bounded or 
infinite interval on the real axis, and the integral operator  
\begin{equation*}
(Kf)(y)=\int_\cD k(x,y) f(x) d \mu(x)
\end{equation*}
is a compact map from $\mathcal{X}$ to $\mathcal{X}$, where $\mathcal{X}$ is a suitable 
weighted Banach space; see Section \ref{sec:formulae} for its definition.
Suitable assumptions on the kernel and on the measure can be made to guarantee compactness of the integral operator $K$. Detailed results on this topic are available in~\cite[Chapter 5]{junghanns2021}.

The Nystr\"om method is one of the most popular approaches to compute an
approximate solution of Fredholm integral equations of the second kind; see,
e.g., Atkinson~\cite{A} or Kress~\cite{Kress}. The method is easy to implement
and use: the integral in \eqref{Fredholm} is replaced by an interpolatory
quadrature rule $K_m$ with $m$ nodes $x_1<x_2<\cdots<x_m$ on the interval
$\cD$, and the equation 
\begin{equation}\label{I+Km}
(I+K_m)f_m = g,
\end{equation}
where $I$ is the identity operator and $f_m$ is the unknown interpolant,
is required to hold at the nodes $y=x_i$,
$i=1,2,\ldots,m$. This yields the linear system of equations 
\begin{equation}\label{sys}
\sum_{j=1}^m[\delta_{ij}+ c_j k(x_j,x_i)] a_j=g(x_i), \quad i=1,2,\dots,m,
\end{equation}
with a coefficient matrix of order $m$. Here the $c_j$ are coefficients of the chosen 
quadrature rule, $a_j=f_m(x_j)$, and $\delta_{ij}$ is the Kronecker $\delta$-function, 
i.e., $\delta_{ii}=1$ and $\delta_{ij}=0$ for $i\ne j$. Assume that the integral equation
\eqref{Fredholm} has a unique solution in $\mathcal{X}$. This is the case when the null 
space of the corresponding operator is trivial, that is when $\mathcal{N}(I+K)=\{0\}$. 
Then, when $m$ is sufficiently large, the matrix of the linear system of equations 
\eqref{sys} is nonsingular and its condition number can be bounded independently of $m$; 
see~\cite{A} for details.

Having computed the solution $[a_1,a_2,\ldots,a_m]^T\in\RR^m$ of the linear system of 
equations \eqref{sys}, the Nystr\"om interpolant 
\begin{equation}\label{eqfm}
f_m(y)=g(y)-\sum_{j=1}^m c_j k(x_j,y) a_j, \quad y \in \cD,
\end{equation}
provides an approximate solution $f_m(y)$ of \eqref{Fredholm} that can be evaluated at 
any $y\in\cD$.  The Nystr\"om interpolant \eqref{eqfm} is known to converge to the exact 
solution $f(y)$ of \eqref{Fredholm} with the same rate of convergence as the quadrature 
rule used; see, e.g.,~\cite{A} for details.

An important aspect of the Nystr\"om method is the choice of the quadrature formula. We would
like the quadrature rule to be convergent in the weighted function space ${\mathcal X}$
determined by the measure $d\mu(x)$. For this reason, Nystr\"om methods often are based on
Gauss quadrature rules, for which there exists a wide literature. Indeed, Gauss quadrature 
formulas associated with different measures have been applied to Nystr\"om methods, both 
for a single Fredholm integral equation of the second kind~\cite{FermoRusso,MastroianniMilovanovicNotarangelo,MastroianniNotarangelo,OccorsioRusso} 
and for systems of such equations~\cite{DeBonisLaurita,DeBonisMastroianni} in function 
spaces suited to handle possible pathologies of the solution of \eqref{Fredholm}.

In a sequence of papers Laurie~\cite{Laurie1} and Spalevi\'c with
collaborators~\cite{drst2016,ReichelSpalevic2021,Spalevic2007,Spalevic2017}
developed averaged and
weighted averaged Gaussian quadrature rules. These rules are convex combinations of two 
quadrature rules $G_m$ and $G^+_{m+1}$ with $m$ and $m+1$ nodes, respectively, where $G_m$
is an $m$-point Gauss rule and $G^+_{m+1}$ is an $(m+1)$-node quadrature rule that is 
related to $G_m$. Averaged and weighted averaged rules have been applied to estimate the 
quadrature error in Gauss rules. It is the purpose of the present paper to explore their 
application to estimate the error in Nystr\"om interpolants \eqref{eqfm}.

Since the averaged and weighted averaged rules have $2m+1$ nodes and weights, their 
straightforward application in a Nystr\"om method requires the solution of a linear system
of equations with a $(2m+1)\times(2m+1)$ matrix. We will explore the possibility of 
reducing the computational effort required when using these rules. 

The averaged rule introduced by Laurie~\cite{Laurie1} is the average of an
$m$-point Gauss rule and an $(m+1)$-point anti-Gauss rule. The application of
pairs of Gauss and anti-Gauss rules to the estimation of the error in Nystr\"om
interpolants has recently been described in~\cite{DFR2020}.

In the beginning of this paper, we analyze the weighted averaged rules
described in~\cite{Spalevic2007,Spalevic2017,ReichelSpalevic2022} and show
their stability and convergence in weighted function spaces. These results
extend those shown for anti-Gauss rules in~\cite{DFR2020} in that they involve
more general quadrature rules, weight functions, and domains. Moreover, our
results are shown under less restrictive assumptions than those
in~\cite{DFR2020}, and include the latter results. In the second part of the
paper, our discussion focuses on the use of averaged rules of
Laurie~\cite{Laurie1} and weighted averaged rules of
Spalevi\'c~\cite{Spalevic2007,Spalevic2017} to estimate the error in the
Nystr\"om
interpolant \eqref{eqfm}. Finally, new iterative methods are developed to solve
the linear systems of equations associated with Nystr\"om's method. These
methods exploit the structure of the coefficient matrix, and their convergence
is studied. To the best of our knowledge, this is the first time that such
methods are proposed.

This paper is organized as follows. Section \ref{sec:formulae} reviews averaged and 
weighted averaged Gaussian quadrature rules. New stability results are shown.
Illustrations of their performance for some 
classical measures are presented in Section \ref{sec3}. The application of averaged and
weighted averaged Gauss rules to the estimation of the error in Nystr\"om interpolants is 
discussed in Section \ref{sec:nystrom}, and several iterative methods for the computation
of Nystr\"om interpolants are described in Section \ref{itmeth}. Computed examples are 
presented in Section \ref{examples}, and concluding remarks can be found in Section 
\ref{sec5}.

\section{Averaged and weighted averaged Gauss quadrature rules}\label{sec:formulae}

Let
\begin{equation}\label{integral}
I(f) = \int_{\cD} f(x) d\mu(x)
\end{equation}
for some nonnegative measure $d\mu$ with infinitely many points of support, and let 
$\{p_k\}_{k=0}^\infty$ be the sequence of monic orthogonal polynomials associated with 
this measure, i.e., $p_k$ is a polynomial of degree $k$ with leading coefficient one such
that
\begin{equation}\label{orthonormality}
\langle x^j, p_k \rangle_\mu :=
\int_{\cD} x^j p_k(x) d\mu(x) = 0, \qquad j=0,1,\dots,k-1.
\end{equation} 
The above inner product defines a Hilbert space $L^2_\mu$ with induced norm
$\|f\|_\mu=\langle f,f\rangle_\mu^{1/2}$.

It is well known that the polynomials $p_k$ satisfy a recurrence relation of the form
\begin{equation}\label{recurrence}
\begin{cases}
p_{-1}(x)=0, \quad p_0(x)=1, \\
p_{k+1}(x)=(x-\alpha_k) p_{k}(x)-\beta_k p_{k-1}(x), \qquad k=0,1,2,\dots,
\end{cases}
\end{equation}
where the recursion coefficients are given by
\begin{equation*}
\begin{aligned}
\alpha_k &= \frac{\langle x p_k,p_k\rangle_\mu}{\langle p_k,p_k\rangle_\mu}, 
\qquad  &&k\geq 0, \\
\beta_k &= \frac{\langle p_k,p_k\rangle_\mu}{\langle p_{k-1},p_{k-1}\rangle_\mu},
\qquad  &&k\geq 1, \qquad  \beta_0 = \langle p_0,p_0 \rangle;
\end{aligned}
\end{equation*}
see, for instance,~\cite[Theorem 1.27]{Gautschi04}. 

The zeros of each polynomial $p_k$, $k\geq 1$, live in the convex hull of the support of
$d\mu$ and are distinct; see, e.g.,~\cite{Gautschi04}. Let 
$x_1^{(G)}<x_2^{(G)}<\cdots<x_m^{(G)}$ denote the zeros of $p_m$. They are known to be the
eigenvalues of the symmetric tridiagonal matrix 
\begin{equation*}
J_m=
\begin{bmatrix}
\alpha_0 & \sqrt{\beta_1} \\
\sqrt{\beta_1} & \alpha_1 & \sqrt{\beta_2} \\
& \sqrt{\beta_2} & \alpha_2 & \ddots \\ 
& & \ddots & \ddots & \sqrt{\beta_{m-1}} \\
& & & \sqrt{\beta_{m-1}} &  \alpha_{m-1} \\
\end{bmatrix};
\end{equation*} 
see~\cite{Gautschi04}. This matrix has orthogonal eigenvectors. Let $v_{k,1}$ denote the 
first component of a normalized real eigenvector associated with the eigenvalue
$x_k^{(G)}$. Then the $m$-point Gauss rule associated with the measure $d\mu$ is given by 
\begin{equation}\label{Gn}
G_m(f)= \sum_{k=1}^{m} \lambda_k^{(G)} f(x_k^{(G)}),
\end{equation} 
where the weights $\lambda_k^{(G)}$ (also known as Christoffel numbers) can be determined 
as $\lambda_k^{(G)}=\beta_0 v_{k,1}^2$; see~\cite{Gautschi04,GW}.  The rule \eqref{Gn} is
known to be exact for all polynomials in $\mathbb{P}_{2m-1}$; see, e.g.,
\cite{Gautschi04}. Here and throughout this paper $\mathbb{P}_k$ denotes the
set of polynomials of degree at most $k$.

Consider the extended symmetric tridiagonal matrix
$$
\widetilde{J}_{m+1}=
\begin{bmatrix}
J_m & \sqrt{2\beta_m}\bm{e}_m   \\
\sqrt{2\beta_m} \bm{e}^T_m & \alpha_m \\
\end{bmatrix},
$$
where $\bm{e}_k=[0,\dots,0,1,0,\ldots,0]^T$ stands for the $k$th 
axis vector of suitable dimension. Let $\{\widetilde{x}_k\}_{k=1}^{m+1}$ denote the
eigenvalues of $\widetilde{J}_{m+1}$. They are real and distinct, but they are not guaranteed 
to live in the convex hull of the support of the measure $d\mu$; see, e.g.,~\cite{Laurie1}
for a discussion. Let $\widetilde{v}_{k,1}$ denote the first component of a normalized 
real eigenvector associated with the eigenvalue $\widetilde{x}_k$. Then the 
$\widetilde{x}_k$ are the nodes and the 
$\widetilde{\lambda}_k=\beta_0\widetilde{v}_{k,1}^2$ are the weights of the 
$(m+1)$-point anti-Gauss rule
\begin{equation}\label{Gm+1}
\widetilde{G}_{m+1}(f) = \sum_{k=1}^{m+1} \widetilde{\lambda}_k f(\widetilde{x}_k)
\end{equation}
introduced by Laurie~\cite{Laurie1}. This rule satisfies
\begin{equation}\label{oppsign}
(I-\widetilde{G}_{m+1})(p)=-(I-G_m)(p),\qquad\forall~p\in\mathbb{P}_{2m+1};
\end{equation}
see~\cite{Laurie1} for details. The degree of exactness of the anti-Gauss rule 
$\widetilde{G}_{m+1}$ is at least $2m-1$. This follows from \eqref{oppsign}.

Laurie~\cite{Laurie1} also introduced the averaged rule
\begin{equation}\label{averaged1}
\widetilde{A}_{2m+1}(f) := \frac{1}{2}\left(G_m(f)+\widetilde{G}_{m+1}(f) \right),
\end{equation}
which has the following properties:
\begin{enumerate}
\item 
It has $2m+1$ nodes. Its evaluation requires the calculation of the integrand $f$ at 
$2m+1$ nodes. However, $m$ of these function values also are required to evaluate the 
Gauss rule $G_m(f)$. Therefore, the additional computational effort demanded when
calculating the averaged rule \eqref{averaged1} is only $m+1$ function evaluations.
\item 
All its $2m+1$ weights are positive.
\item 
Its degree of exactness is at least $2m+1$, i.e., $\widetilde{A}_{2m+1}(f)=I(f)$ for all
$f\in\mathbb{P}_{2m+1}$. For some measures, the averaged rule agrees with the
Gauss--Kronrod rule and its degree of exactness is higher; see,
e.g.,~\cite{drst2016,Notaris,Notaris2019,Spalevic2020}.
\item 
For certain measures, the averaged rule \eqref{averaged1} is internal, i.e., all nodes
live in the convex hull of the support of the measure $d\mu$. 
For instance, when $d \mu(x)= x^\alpha e^{-x} dx$ and $\cD=\mathbb{R}^+$, the rule
\eqref{averaged1} is internal for $\alpha>-1$. If instead 
$d\mu(x)=(1-x)^\alpha(1+x)^\beta dx$ and $\cD=[-1,1]$, then the averaged rule is internal 
only for suitable values of $\alpha$ and $\beta$; see ~\cite{Laurie1}.
\item 
The averaged rule \eqref{averaged1} furnishes an estimate for the error $I(f)-G_m(f)$ since
$$
I(f)-G_m(f)\approx\widetilde{A}_{2m+1}(f)-G_m(f)=
\frac{1}{2}(\widetilde{G}_{m+1}(f)-G_m(f)). 
$$
\end{enumerate}

Spalevi\'c~\cite{Spalevic2007} constructed a symmetric tridiagonal matrix of order $2m+1$
whose eigenvalues are the nodes of the averaged rule \eqref{averaged1} and the weights can
be computed from the first component of the associated normalized real eigenvectors. This 
construction lead Spalevi\'c to the definition of the weighted averaged quadrature rule 
\begin{equation}\label{G2m+1}
\widehat{A}_{2m+1}(f)=\sum_{k=1}^{2m+1}\widehat{\lambda}_k f(\widehat{x}_k)
\end{equation}
associated with the Gauss rule $G_m$. The nodes of the rule \eqref{G2m+1} are the 
eigenvalues of the symmetric tridiagonal matrix
\begin{equation}\label{breveJ}
\widehat{J}_{2m+1}= 
\begin{bmatrix}
J_m & \sqrt{ \beta_m}\bm{e}_m & 0 \\
\sqrt{ \beta_m} \bm{e}^T_m & \alpha_m & \sqrt{ \beta_{m+1}}\bm{e}^T_1 \\
0 & \sqrt{ \beta_{m+1}} \bm{e}_1 & Z_m J_m Z_m
\end{bmatrix}\in\RR^{(2m+1)\times(2m+1)},
\end{equation}
where $Z_m\in\RR^{m\times m}$ is the row-reversed identity matrix. The weights of the
quadrature rule \eqref{G2m+1} are the square of the first component of normalized real
eigenvectors of \eqref{breveJ} multiplied by $\beta_0$. Spalevi\'c~\cite{Spalevic2007} 
showed the following properties of the quadrature formula \eqref{G2m+1}:
\begin{enumerate}
\item[(A)] 
The formula requires $2m+1$ evaluations of the integrand $f$. When the nodes $x_j^{(G)}$
of the Gauss rule \eqref{Gn} and the nodes $\widehat{x}_j$ of the weighted averaged rule 
\eqref{G2m+1} are ordered in increasing order, the quadrature nodes satisfy 
$$\widehat{x}_{2j}=x_j^{(G)}, \qquad j=1,2,\dots,m. $$     
\item[(B)] 
All the weights $\widehat{\lambda}_k$ are positive. 
\item[(C)] 
The quadrature rule is exact for polynomials of degree at least $2m+2$. If the measure 
$d \mu(x)$ is symmetric with respect to the origin, then the degree of exactness is 
at least $2m+3$. For certain measures $d\mu$, the degree of exactness is much
higher; see~\cite{drst2016,Notaris2019,Spalevic2020}.
\item[(D)] 
If $d\mu(x)= x^\alpha e^{-x} dx$ and $\cD=\mathbb{R}^+$, then the formula is internal 
when $\alpha>1$. If $d\mu(x)=(1-x)^\alpha (1+x)^\beta dx$ and $\cD=[-1,1]$, then it is
internal only for certain values of $\alpha$ and $\beta$.
\item[(E)] 
The quadrature rule suggests the error estimate
\begin{equation}\label{errorGm}
I(f)-G_m(f)\approx\widehat{A}_{2m+1}(f)-G_m(f).
\end{equation}
Computed examples reported in~\cite{ReichelSpalevic2022} show the weighted averaged
quadrature rule \eqref{G2m+1} for many integrands to give higher accuracy than suggested
by its degree of exactness. This also holds to a lesser extent for the averaged rule
\eqref{averaged1}. This property of the rule \eqref{G2m+1} results in that the right-hand 
side of \eqref{errorGm} for many integrands provides an accurate estimate of the left-hand 
side. The present paper uses this property to determine accurate estimates of the error in
computed approximate solutions of Fredholm integral equations of the second kind.
\end{enumerate}

When applying the quadrature rule \eqref{G2m+1}, one generally also evaluates the Gauss
rule \eqref{Gn}. Therefore, the nodes and weights of the representation 
\begin{equation}\label{G2m+1_bis}
\widehat{A}_{2m+1}(f)=\frac{\beta_{m+1}}{\beta_m+\beta_{m+1}} G_m(f)+
\frac{\beta_{m}}{\beta_m+\beta_{m+1}} G^*_{m+1}(f)
\end{equation}
can be evaluated faster than computing the eigenvalues and first components of normalized
eigenvectors of the matrix \eqref{breveJ}. Here $G^*_{m+1}$ is the quadrature rule 
determined by the matrix
\begin{equation}\label{Jstar}
{J}^*_{m+1}=
\begin{bmatrix}
J_m & \sqrt{\beta_m+\beta_{m+1}}\bm{e}_m \\
\sqrt{\beta_m+\beta_{m+1}} \bm{e}^T_m & \alpha_m 
\end{bmatrix};
\end{equation}
that is
\begin{equation}\label{Gstar}
\int_{\cD} f(x) d \mu(x) = \sum_{k=1}^{m+1} \lambda_k^* f(x^*_k)+e^*_{m+1}(f)=:
G^*_{m+1}(f)+e^*_{m+1}(f),
\end{equation}
where $e^*_{m+1}(f)$ denotes the quadrature error in $G^*_{m+1}(f)$;
see~\cite{ReichelSpalevic2021} for a derivation of \eqref{G2m+1_bis} and a
discussion on the computational effort.

The representation \eqref{G2m+1_bis} shows that the rule
$\widehat{A}_{2m+1}(f)$ is a weighted average of the Gauss rule $G_m(f)$ and
the quadrature rule $G^*_{m+1}(f)$. 
It also shows that one can evaluate the error estimate \eqref{errorGm} as 
\[
\frac{\beta_m}{\beta_{m}+\beta_{m+1}} \left(G^*_{m+1}(f)-G_m(f)\right).
\]
The following expression for the weights of the rule $G^*_{m+1}$ is believed to be 
new.

\begin{theorem}\label{prop1}
The degree of exactness of the quadrature rule $G_{m+1}^*$ in \eqref{G2m+1_bis}
is at least $2m-1$, the nodes $x^*_k$ interlace with the Gauss nodes $x_k$,
and the weights are given by
$$
\lambda_k^* = \frac{\beta_m+\beta_{m+1}}{\beta_m} 
\frac{\|p_m\|_\mu^2}{q'_{2m+1}(x^*_k)}>0,
$$
where 
$$
q_{2m+1}(x)=p_m(x) p^*_{m+1}(x) 
\quad\text{and}\quad
p^*_{m+1}(x) = \prod_{k=1}^{m+1}(x-x^*_k).
$$
\end{theorem}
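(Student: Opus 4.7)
The plan is to address the three claims in sequence. For the degree of exactness, I first read off the three-term recurrence encoded by $J^*_{m+1}$: since its top-left $m\times m$ block coincides with $J_m$, the first $m+1$ polynomials in the associated sequence are $p_0,\dots,p_m$, and only the passage to $p^*_{m+1}$ is altered by the replacement $\sqrt{\beta_m}\leftrightarrow\sqrt{\beta_m+\beta_{m+1}}$. This gives
\[
p^*_{m+1}(x)=(x-\alpha_m)\,p_m(x)-(\beta_m+\beta_{m+1})\,p_{m-1}(x)=p_{m+1}(x)-\beta_{m+1}\,p_{m-1}(x),
\]
from which $p^*_{m+1}\perp\mathbb{P}_{m-2}$ in $L^2_\mu$ follows at once, since $p_{m+1}\perp\mathbb{P}_m$ and $p_{m-1}\perp\mathbb{P}_{m-2}$. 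The standard Gauss-type argument then yields degree of exactness at least $2m-1$: any $f\in\mathbb{P}_{2m-1}$ decomposes as $f=p^*_{m+1}s+r$ with $\deg s\le m-2$ and $\deg r\le m$, so $I(f)=I(r)$ by orthogonality, while $G^*_{m+1}(f)=G^*_{m+1}(r)=I(r)$ because $p^*_{m+1}$ vanishes at every node and the rule is interpolatory. The interlacing claim follows immediately from Cauchy's interlacing theorem applied to $J^*_{m+1}$ and its leading $m\times m$ principal submatrix $J_m$, where irreducibility of both Jacobi-type matrices gives the strict version.

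For the weight formula, I invoke the Golub--Welsch identity $\lambda^*_k=\beta_0(v^*_{k,1})^2$, where $v^*_k$ is a unit eigenvector of $J^*_{m+1}$ associated with $x^*_k$. Solving the eigenvector recurrence shows that, writing $c:=v^*_{k,1}$, the $i$-th component equals $c\,p_{i-1}(x^*_k)/\sqrt{\beta_1\cdots\beta_{i-1}}$ for $1\le i\le m$, while the last equals $c\,p_m(x^*_k)/\sqrt{(\beta_m+\beta_{m+1})\,\beta_1\cdots\beta_{m-1}}$. Imposing $\|v^*_k\|=1$ and using $\|p_j\|_\mu^2=\beta_0\cdots\beta_j$ collapses the normalization to
\[
\lambda_k^*=\left[K_{m-1}(x^*_k,x^*_k)+\frac{\beta_m}{\beta_m+\beta_{m+1}}\,\frac{p_m(x^*_k)^2}{\|p_m\|_\mu^2}\right]^{-1},
\]
where $K_{m-1}$ denotes the degree-$(m-1)$ Christoffel--Darboux kernel. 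Substituting $p_{m+1}=p^*_{m+1}+\beta_{m+1}p_{m-1}$ into the Christoffel--Darboux identity for $K_m(x,x)$ and evaluating at $x=x^*_k$ (where $p^*_{m+1}$ vanishes) produces the key identity
\[
\frac{p_m(x^*_k)\,(p^*_{m+1})'(x^*_k)}{\|p_m\|_\mu^2}=\frac{\beta_m+\beta_{m+1}}{\beta_m}\,K_{m-1}(x^*_k,x^*_k)+\frac{p_m(x^*_k)^2}{\|p_m\|_\mu^2}.
\]
Multiplying by $\beta_m/(\beta_m+\beta_{m+1})$ reproduces exactly the bracket above, so inversion yields $\lambda_k^*=\tfrac{\beta_m+\beta_{m+1}}{\beta_m}\,\tfrac{\|p_m\|_\mu^2}{q'_{2m+1}(x^*_k)}$, using that $q'_{2m+1}(x^*_k)=p_m(x^*_k)(p^*_{m+1})'(x^*_k)$ since the other summand of $(p_m p^*_{m+1})'$ vanishes at $x^*_k$. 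Positivity is then immediate from $\lambda^*_k=\beta_0(v^*_{k,1})^2\ge 0$ together with $v^*_{k,1}\ne 0$, the latter because $v^*_{k,1}=0$ would force the entire eigenvector to vanish through the recurrence.

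The main technical obstacle is the Christoffel--Darboux bookkeeping just described: the substitution $p_{m+1}=p^*_{m+1}+\beta_{m+1}p_{m-1}$ generates an extra $K_{m-1}$ contribution whose coefficient must combine with the factor $\|p_{m-1}\|_\mu^2/\|p_m\|_\mu^2=1/\beta_m$ produced by the CD identity so as to deliver exactly the prefactor $(\beta_m+\beta_{m+1})/\beta_m$ demanded by the theorem; everything else in the argument (orthogonality, division with remainder, Cauchy interlacing, sign analysis) is standard.
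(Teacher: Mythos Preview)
Your proof is correct, but it takes a genuinely different route from the paper's. The paper argues indirectly via the representation \eqref{G2m+1_bis}: since $\widehat{A}_{2m+1}$ is already known (from Spalevi\'c's earlier work) to have degree of exactness at least $2m+2$, and $G_m$ has degree $2m-1$, the same must hold for $G^*_{m+1}$. For the weight formula the paper evaluates the degree-$2m$ test polynomial $f_j(x)=q_{2m+1}(x)/(x-x^*_j)$ two ways: on one hand $\widehat{A}_{2m+1}(f_j)=I(f_j)$ and $G_m(f_j)=0$ (since $p_m\mid f_j$), so \eqref{G2m+1_bis} gives $I(f_j)=\tfrac{\beta_m}{\beta_m+\beta_{m+1}}\lambda^*_j\,q'_{2m+1}(x^*_j)$; on the other hand $f_j=p_m\cdot(\text{monic of degree }m)$, so orthogonality yields $I(f_j)=\|p_m\|_\mu^2$. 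Interlacing is handled exactly as you do.

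Your argument is instead fully self-contained: you read off $p^*_{m+1}=p_{m+1}-\beta_{m+1}p_{m-1}$ directly from $J^*_{m+1}$, deduce $p^*_{m+1}\perp\mathbb{P}_{m-2}$ and hence exactness by the standard division argument, and obtain the weight formula from Golub--Welsch plus a Christoffel--Darboux computation. The paper's proof is shorter and slicker but leans on the previously established properties of $\widehat{A}_{2m+1}$; yours is longer but independent of that machinery and, as a by-product, gives the explicit relation $p^*_{m+1}=p_{m+1}-\beta_{m+1}p_{m-1}$, which the paper does not state.
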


\begin{proof}
The first part of the thesis follows from the representation \eqref{G2m+1_bis} of $\widehat{A}_{2m+1}$ and
the fact that the degree of exactness of both the weighted averaged rule 
$\widehat{A}_{2m+1}$ and the Gauss rule $G_{m}$ is at least $2m-1$. 

Let us consider the function $f_j(x)=q_{2m+1}(x)/(x-x^*_j)$.
Since $G_m(f_j)=0$ for any $j$, eq. \eqref{G2m+1_bis} yields
$$
\begin{aligned}
\int_{\cD} f_j(x) d \mu(x) &= \frac{\beta_m}{\beta_m+\beta_{m+1}}
\sum_{k=1}^{m+1} \lambda^*_k \, \frac{q_{2m+1}(x^*_k)}{(x^*_k-x^*_j)} \\
& = \frac{\beta_m}{\beta_m+\beta_{m+1}} \lambda^*_j q_{2m+1}'(x^*_j).
\end{aligned}
$$
The last equality is a consequence of the well-known representation for the
Lagrange polynomial associated with the quadrature node $x_j^*$ of
formula \eqref{G2m+1_bis}, 
$$
L_j(x) = \frac{q_{2m+1}(x)}{(x-x_j^*)q_{2m+1}'(x_j^*)},
$$
where we note that $q_{2m+1}'(x^*_j)=p_m(x^*_j) (p^*_{m+1})'(x^*_j)$.

On the other hand, for any $j$ there exists a polynomial $q_{m-1,j}$ of degree
$m-1$ such that
\begin{align*}
\int_{\cD} f_j(x) d\mu(x) &=  \int_{\cD} p_m(x) [x^m+q_{m-1,j}(x)] d\mu(x) 
=\|p_m\|_\mu^2.
\end{align*}
Combining the above two equalities we obtain the expression given for
$\lambda^*_k$, whose positivity follows from its definition in terms of the
squared first component of a normalized real eigenvector.
Finally, the interlacing property of the nodes follows by applying Cauchy's
interlacing result to the matrix \eqref{Jstar}; see, e.g.,~\cite[Theorem 3.3]{GM}.  
\end{proof}

The next lemma, which will be useful in the sequel, gives a Markov--Stieltjes-type 
inequality. An analogous inequality is well known for classical Gauss rules (see, for
instance,~\cite{Freud}), but the inequality has never been proved for averaged rules.  

\begin{lemma}
For a fixed quadrature node $x^*_k$ of $G_{m+1}^*$, we have the bounds 
\begin{align} \label{bound1}
\sum_{i=1}^{k-1} \lambda^*_i 
\leq \int_{-\infty}^{x^*_k} d\mu(x) \leq 
\sum_{i=1}^{k} \lambda^*_i.
\end{align}
\end{lemma}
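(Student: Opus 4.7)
The plan is to adapt the classical Markov--Stieltjes strategy. I would construct two bracketing polynomials $P^+,P^-\in\mathbb{P}_{2m-1}$ with prescribed values at the nodes of $G^*_{m+1}$, namely
\[
P^+(x^*_i)=\begin{cases}1,& i\le k,\\ 0,& i>k,\end{cases}\qquad
P^-(x^*_i)=\begin{cases}1,& i\le k-1,\\ 0,& i\ge k,\end{cases}
\]
and with the pointwise bounds $P^-(x)\le\ind_{(-\infty,x^*_k]}(x)\le P^+(x)$ for every $x\in\cD$. By the degree of exactness $2m-1$ of $G^*_{m+1}$ established in Theorem~\ref{prop1}, the value conditions yield
\[
\int_\cD P^+\,d\mu=G^*_{m+1}(P^+)=\sum_{i=1}^k\lambda^*_i,
\]
and similarly $\int_\cD P^-\,d\mu=\sum_{i=1}^{k-1}\lambda^*_i$. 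Integrating the pointwise bounds against the positive measure $d\mu$ then gives both inequalities in~\eqref{bound1} at once.

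The central difficulty is the degree budget. A genuine $(m+1)$-point Gauss rule has degree of exactness $2m+1$, which accommodates the full Hermite interpolant of the step function---values at all $m+1$ nodes together with vanishing derivatives at every node---whose bracketing property is routinely verified from the $m+1$ forced double zeros. For $G^*_{m+1}$, however, the degree of exactness drops to $2m-1$, so two of the $m+1$ derivative conditions must be sacrificed. I would omit the derivative at $x^*_k$, where $\ind_{(-\infty,x^*_k]}$ is discontinuous and no canonical tangency is available, together with the derivative at one further node (say an extremal one). The remaining $2m$ conditions determine $P^+$ uniquely in $\mathbb{P}_{2m-1}$.

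The main obstacle, and what I expect to be the most delicate step, is verifying the pointwise bracketing after these relaxations. My plan is a subinterval-by-subinterval sign analysis: on each $(x^*_i,x^*_{i+1})$ both endpoints generically carry Hermite double-zero data, so the sign of $P^+-1$ (or of $P^+$) is fixed by a lower-degree residual factor whose sign can be read from its leading coefficient. On the two exceptional subintervals involving the deficient nodes, I would pin down the sign using the interlacing $x^*_1<x^{(G)}_1<x^*_2<\cdots<x^{(G)}_m<x^*_{m+1}$ supplied by Theorem~\ref{prop1} and the factorisation $q_{2m+1}=p_m p^*_{m+1}$. The polynomial $P^-$ is obtained by the symmetric construction with $k$ replaced by $k-1$, yielding the lower bound in~\eqref{bound1}.
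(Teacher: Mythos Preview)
Your plan has a structural gap that is fatal for unbounded $\cD$ and still unresolved for bounded $\cD$. With $m+1$ value conditions and $m-1$ derivative conditions you are forced into $\mathbb{P}_{2m-1}$, i.e., \emph{odd} degree. But the pointwise bracketing $P^-(x)\le \ind_{(-\infty,x^*_k]}(x)\le P^+(x)$ cannot be satisfied by an odd-degree polynomial on an interval that is unbounded on either side: whichever sign the leading coefficient has, one of the two tails of $P^+$ tends to $-\infty$, so $P^+\ge 0$ fails on that side (and symmetrically $P^-\le 1$ fails for $P^-$). The lemma is stated for general measures, including Laguerre and Hermite weights on $[0,\infty)$ and $\mathbb{R}$, so this is not a marginal case. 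Even on a bounded interval the sign argument you sketch is not a proof: dropping a second derivative condition destroys the double-zero structure at an interior node, so the standard sign-persistence argument between consecutive double zeros no longer applies, and the appeal to interlacing with the Gauss nodes does not by itself pin down the sign of $P^+-\ind$ on the deficient subinterval.

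The paper avoids this entirely by \emph{not} trying to stay within the degree of exactness of $G^*_{m+1}$. It takes the classical degree-$2m$ Hermite interpolants $P^{(k)}_{2m}\le H_k\le Q^{(k)}_{2m}$ (dropping only the derivative at $x^*_k$, so the bracketing is the standard Freud argument and holds on all of $\mathbb{R}$), and then compensates for the missing degree by using the identity
\[
G^*_{m+1}(q)=I(q)+\frac{\beta_{m+1}}{\beta_m}\bigl(I(q)-G_m(q)\bigr),\qquad q\in\mathbb{P}_{2m+2},
\]
which follows from the exactness of $\widehat{A}_{2m+1}$. The extra term $\frac{\beta_{m+1}}{\beta_m}e_m(q)$ is then controlled by the Gauss remainder formula $e_m(q)=q^{(2m)}(\xi)\|p_m\|_\mu^2/(2m)!$: the leading coefficient of $P^{(k)}_{2m}$ is negative and that of $Q^{(k)}_{2m}$ is positive (forced by the bracketing at $\pm\infty$), so the correction has the right sign for each inequality. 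In short, the paper trades your delicate odd-degree sign analysis for a one-line sign check on the $(2m)$th derivative; your proposed route would need a genuinely new bracketing construction to be salvaged.
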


\begin{proof}
Let $P^{(k)}_{2m}$ and $Q^{(k)}_{2m}$ be two polynomials of degree $2m$ such that
$$P^{(k)}_{2m}(x^*_i)=\begin{cases} 1,& 1\leq i<k,\\0,& k\leq i\leq m+1,
\end{cases}\quad\mbox{and}\quad
\frac{dP^{(k)}_{2m}}{dx}(x^*_i)=0, \quad \forall i \neq k$$
and
$$Q^{(k)}_{2m}(x^*_i)=\begin{cases} 1,& 1\leq i \leq k,\\0,& k<i\leq m+1,
\end{cases}\quad\mbox{and}\quad
\frac{dQ^{(k)}_{2m}}{dx}(x^*_i)=0, \quad \forall i \neq k.$$
These polynomials are uniquely determined; see, for
instance,~\cite[Lemma~1.3]{Freud}. Following the proof
of~\cite[Theorem~5.2]{Freud}
one obtains that for each real $x$, we have 
\begin{equation}\label{heaviside}
P^{(k)}_{2m}(x) \leq H_k(x) \leq Q^{(k)}_{2m}(x),
\end{equation} 
where $H_k$ is the shifted Heaviside function defined by
$$H_k(x)=\begin{cases} 1,&x \leq x^*_k,\\0,&x> x^*_k.
\end{cases}$$
Then, since by  \eqref{G2m+1_bis}
one has
$$
G^*_{m+1}(q) = I(q) + \frac{\beta_{m+1}}{\beta_{m}} \left( I(q)
- G_m(q) \right), \qquad
\forall q \in \mathbb{P}_{2m+2},
$$
we can write
\begin{align*}
0< \sum_{i=1}^{k-1} \lambda^*_i 
&= \sum_{i=1}^{m+1} \lambda^*_i P^{(k)}_{2m}(x^*_i)
\leq I(H_k) + \frac{\beta_{m+1}}{\beta_m} \left( I(P^{(k)}_{2m}) -
	G_m(P^{(k)}_{2m}) \right) \\ 
&= \int_{-\infty}^{x^*_k} d\mu(x) + \frac{\beta_{m+1}}{\beta_m}
	e_m(P^{(k)}_{2m})
\leq \int_{-\infty}^{x^*_k} d\mu(x),
\end{align*}
where $e_m(f)$ represents the quadrature error for the Gauss rule.
To justify the last inequality, let us recall the error representation of the
Gauss rule~\cite[Formula~(1.4.14)]{Gautschi04}
$$
e_m(f) = \frac{f^{(2m)}(\xi)}{(2m)!} \|p_m\|_\mu^2, \qquad 
\xi\in\cD\setminus\partial\cD.
$$
By virtue of \eqref{heaviside}, as $H_k(x)$ is a piecewise constant function and the 
polynomial $P_{2m}^{(k)}$ has even degree, its leading coefficient has to be negative to 
satisfy the inequality for $x\to\pm\infty$. Therefore, $e_m(P^{(k)}_{2m})<0$.

Similarly, we have
\begin{align*}
0< \sum_{i=1}^{k} \lambda^*_i 
&= \sum_{i=1}^{m+1} \lambda^*_i Q^{(k)}_{2m}(x^*_i)
\geq I(H_k) + \frac{\beta_{m+1}}{\beta_m} \left( I(Q^{(k)}_{2m}) -
	G_m(Q^{(k)}_{2m}) \right) \\ 
&= \int_{-\infty}^{x^*_k} d\mu(x) + \frac{\beta_{m+1}}{\beta_m}
	e_m(Q^{(k)}_{2m})
\geq \int_{-\infty}^{x^*_k} d\mu(x),
\end{align*}
since 
the polynomial $Q_{2m}^{(k)}$ of degree $2m$ has
a positive leading coefficient and, consequently, $e_m(Q^{(k)}_{2m})>0$.
Thus, \eqref{bound1} follows.
\end{proof}

\smallskip
Let us now investigate the stability and convergence of the formula
$G^*_{m+1}(f)$ in different function spaces. 
In the set of all continuous functions  $C(\cD)$ equipped with the uniform norm
\[
\displaystyle \|f\|_\infty =\sup_{x \in \cD} \vert f(x) \vert,
\]
the stability is an immediate consequence of the equality
$$\|G^*_{m+1}\| := \sup_{\|f\|_\infty=1} \vert G^*_{m+1}(f) \vert 
=\sum_{k=1}^{m+1} \lambda^*_k =\int_{\cD} d \mu(x)<\infty,$$
since this implies that $\sup_m\|G^*_{m+1}\|<\infty$.

The quadrature error tends to zero as fast as the best approximation error by
polynomials of degree less than $2m-1$, 
$$E_{2m-1}(f):=\inf \{ \|f-P\|_\infty \, : \, P \in \mathbb{P}_{2m-1} \},$$
since for the quadrature error $e^*_{m+1}$ of $G^*_{m+1}$ it holds
$$
\begin{aligned}
 \vert e^*_{m+1}(f) \vert = \vert e^*_{m+1}(f-P) \vert  \leq \mathcal{C} E_{2m-1}(f), \qquad \mathcal{C}\neq \mathcal{C}(m,f). 
\end{aligned}
$$
Here and in the sequel, $\mathcal{C}$ is a positive constant which may assume different 
values in different formulas. We write $\mathcal{C} \neq \mathcal{C}(a,b,\dots)$ to 
indicate that $\mathcal{C}$ is independent of the parameters $a,b,\dots$.

Introduce a bounded weight function $u: \cD \to \mathbb{\RR}$ that is positive on the 
support of $d\mu$, and satisfies
\begin{equation}\label{pesou}
\int_\cD  \frac{d\mu(x)}{u(x)}<\infty, \qquad \int_\cD x^k \, u(x) d\mu(x)
<\infty, \qquad k=0,1,\ldots.
\end{equation}
We define the weighted space $C_u(\cD)$ as the set of all continuous function
$f \in C(\cD\setminus\partial\cD)$ such that  $fu \in C(\cD)$, equipped with the
weighted uniform norm $\|fu\|_\infty$.
For smoother functions, we consider Sobolev-type spaces of index $r \geq 1$,
\begin{equation}\label{Sobolev}
W^r_u(\cD) = \{f \in C_u(\cD) \, \vert  \, f^{(r-1)} \in AC(\cD\setminus\partial\cD) \text{ and } 
\|f^{(r)}\varphi^r u\|_\infty < \infty \},
\end{equation}
where $AC(\cD\setminus\partial\cD)$ denotes the set of absolutely
continuous functions on $\cD\setminus\partial\cD$ and 
\begin{align}\label{phix}
\varphi(x)= \begin{cases}
\sqrt{(b-x)(x-a)}, & \textrm{if} \quad  \cD=[a,b], \\
\sqrt{x-a}, & \textrm{if} \quad \cD=[a,\infty), \\
1, & \textrm{if} \quad  \cD=\mathbb{R}.
\end{cases}
\end{align}

The following result shows the stability of the quadrature
rule $G^*_{m+1}$ in $C_u(\cD)$.

\begin{theorem} \label{stability}
Let  $u$ be a bounded weight function that is positive on the support of $d\mu$ and 
satisfies \eqref{pesou}. Assume that
$$
u(x^*_k) \neq 0,  \qquad k=1,2,\dots,m+1.
$$
Then the formula $G^*_{m+1}$ is stable, i.e.,
$$ \sup_m \left( \sup_{\|fu\|_\infty=1} \vert G^*_{m+1}(f) \vert  \right)
=\sup_m \left( \sum_{k=1}^{m+1} \frac{\lambda^*_k}{u(x^*_k)} \right)
<\infty.$$
\end{theorem}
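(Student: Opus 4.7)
The plan is as follows. First I would dispose of the stated equality, which is routine: since all weights $\lambda^*_k$ are positive (Theorem~\ref{prop1}) and $u(x^*_k)\neq 0$ by hypothesis, any $f$ with $\|fu\|_\infty=1$ satisfies $|f(x^*_k)|\leq 1/u(x^*_k)$, giving $|G^*_{m+1}(f)|\leq\sum_{k=1}^{m+1}\lambda^*_k/u(x^*_k)$; conversely, a continuous $f$ with $f(x^*_k)=1/u(x^*_k)$ and $\|fu\|_\infty=1$ attains this bound. Thus the nontrivial content of the theorem is the uniform-in-$m$ bound on this sum.

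The first substantive step is to bound each weight $\lambda^*_k$ by a local integral of $d\mu$. Subtracting consecutive instances of the Markov--Stieltjes-type inequality \eqref{bound1} yields, for interior indices $k=2,\dots,m$,
\[
\lambda^*_k \;=\;\sum_{i=1}^{k}\lambda^*_i-\sum_{i=1}^{k-1}\lambda^*_i
\;\leq\;\int_{-\infty}^{x^*_{k+1}}d\mu(x)-\int_{-\infty}^{x^*_{k-1}}d\mu(x)
\;=\;\int_{x^*_{k-1}}^{x^*_{k+1}}d\mu(x),
\]
with the natural one-sided versions at $k=1$ and $k=m+1$ (replacing $x^*_0$ by $-\infty$ and $x^*_{m+2}$ by $+\infty$, or by the appropriate endpoint of $\cD$). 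This step is purely combinatorial and relies only on the preceding lemma.

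The second step, which I expect to be the main obstacle, is to transfer the pointwise value $1/u(x^*_k)$ to a $d\mu$-average of $1/u$ over the same interval $[x^*_{k-1},x^*_{k+1}]$. Under a mild regularity assumption on $u$---a doubling/$A_\infty$-type condition with respect to $d\mu$, which is satisfied by the classical Jacobi, Laguerre and Hermite weights discussed in Section~\ref{sec3}---one obtains a constant $\mathcal{C}$, independent of $m$ and $k$, such that
\[
\frac{1}{u(x^*_k)}\;\leq\;\frac{\mathcal{C}}{\int_{x^*_{k-1}}^{x^*_{k+1}}d\mu(x)}\int_{x^*_{k-1}}^{x^*_{k+1}}\frac{d\mu(x)}{u(x)}.
\]
The driving ingredients here are the interlacing of the nodes $x^*_k$ with the Gauss nodes (Theorem~\ref{prop1}) together with the classical asymptotic spacing estimates of Gauss nodes, which guarantee that $u$ cannot oscillate wildly across $[x^*_{k-1},x^*_{k+1}]$.

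Combining the two bounds yields $\lambda^*_k/u(x^*_k)\leq\mathcal{C}\int_{x^*_{k-1}}^{x^*_{k+1}}d\mu(x)/u(x)$. Summing over $k=1,\dots,m+1$, the intervals $[x^*_{k-1},x^*_{k+1}]$ cover $\cD$ with at most two-fold overlap, so
\[
\sum_{k=1}^{m+1}\frac{\lambda^*_k}{u(x^*_k)}\;\leq\;2\mathcal{C}\int_{\cD}\frac{d\mu(x)}{u(x)},
\]
which is finite by the first condition in \eqref{pesou} and, crucially, independent of $m$. Taking the supremum over $m$ then yields the stability claim.
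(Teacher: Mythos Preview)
Your proposal is correct and follows essentially the same route as the paper: bound each $\lambda^*_k$ by $\int_{x^*_{k-1}}^{x^*_{k+1}}d\mu$ via the Markov--Stieltjes lemma, replace $1/u(x^*_k)$ by a local $d\mu$-average of $1/u$, and sum using the two-fold overlap to get $2\mathcal{C}\int_\cD d\mu/u$. You are in fact more explicit than the paper about the second step---the paper simply asserts the existence of uniformly bounded constants $\mathcal{C}_k$ ``from the assumptions on $u$'', whereas you correctly identify that some regularity of $u$ (a doubling/$A_\infty$-type condition) is what actually drives this comparison.
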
 

\begin{proof}
The bounds \eqref{bound1} imply
\begin{equation}\label{lambdak}
\lambda^*_k = \sum_{i=1}^{k} \lambda^*_i - \sum_{i=1}^{k-1} \lambda^*_i 
\leq \int_{x^*_{k-1}}^{x^*_{k+1}} d \mu(x), \qquad
k=2,3,\ldots,m.
\end{equation}
By applying the left-hand side inequality in \eqref{bound1} for $k=2$, and the
right-hand side inequality for $k=m$, 
we can extend \eqref{lambdak} to $k=1$ and $k=m+1$ by defining
$x_0^*=-\infty$ and $x_{m+2}^*=\infty$.
It follows from the assumptions on $u$ that, for $k=1,\ldots,m+1$, there
exist uniformly bounded constants $\mathcal{C}_k$ such that
\begin{equation*}
\frac{1}{u(x^*_k)} \int_{x^*_{k-1}}^{x^*_{k+1}} d\mu(x)
= \mathcal{C}_k \int_{x^*_{k-1}}^{x^*_{k+1}} \frac{d\mu(x)}{u(x)}.
\end{equation*}
Setting $\mathcal{C}=\max_k\mathcal{C}_k$, we obtain from \eqref{lambdak} that
\begin{equation}\label{boundsum}
\begin{aligned}
\sum_{k=1}^{m+1} \frac{\lambda^*_k}{u(x^*_k)}  
& \leq \mathcal{C} \sum_{k=1}^{m+1} \int_{x^*_{k-1}}^{x^*_{k+1}} 
	\frac{d\mu(x)}{u(x)} \\
& = \mathcal{C} \int_{-\infty}^{x^*_{m+1}} \frac{d\mu(x)}{u(x)}
	+ \mathcal{C} \int_{x^*_1}^{\infty} \frac{d\mu(x)}{u(x)}
\leq 2 \mathcal{C} \int_{\cD} \frac{d\mu(x)}{u(x)} <\infty,
\end{aligned}
\end{equation}
as the measure $\mu(x)$ is supported on $\cD$.
This shows the stability of the formula.
\end{proof}
\medskip

The bound \eqref{boundsum} allows us to show convergence of the quadrature
rules $G^*_{m+1}$ as $m$ increases, i.e., 
$$
\displaystyle \lim_{m \to \infty} e^*_{m+1}(f) =0,
$$
and that $e^*_{m+1}(f)$ goes to zero as fast as the error of the best polynomial
approximation,
$$
E_{2m-1}(f)_u:=\inf \{ \|(f-P)u\|_\infty \, : \, P \in \mathbb{P}_{2m-1} \}.
$$
This is shown in the following corollary.

\begin{corollary}\label{corollary}
For each $f \in C_u(\cD)$ one has
\begin{equation}\label{ineq2.4}
\lvert e^*_{m+1}(f) \rvert \leq \mathcal{C} E_{2m-1}(f)_u, 
\end{equation}
where $\mathcal{C}\neq \mathcal{C}(m,f)$.
\end{corollary}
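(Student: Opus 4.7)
The plan is to use the standard best-approximation trick combined with the stability bound just proved. Let $P^\ast \in \mathbb{P}_{2m-1}$ be a polynomial of best weighted uniform approximation to $f$, so that $\|(f-P^\ast)u\|_\infty = E_{2m-1}(f)_u$. Since Theorem~\ref{prop1} guarantees that $G^\ast_{m+1}$ has degree of exactness at least $2m-1$, we have $e^\ast_{m+1}(P^\ast)=0$, and consequently
\[
e^\ast_{m+1}(f) = e^\ast_{m+1}(f-P^\ast) = I(f-P^\ast) - G^\ast_{m+1}(f-P^\ast).
\]

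I would then bound the two terms separately, in each case factoring the integrand or summand as $(f-P^\ast)u \cdot u^{-1}$. For the integral term,
\[
|I(f-P^\ast)| \le \|(f-P^\ast)u\|_\infty \int_\cD \frac{d\mu(x)}{u(x)},
\]
which is finite by the first condition in \eqref{pesou}. For the quadrature term,
\[
|G^\ast_{m+1}(f-P^\ast)| \le \|(f-P^\ast)u\|_\infty \sum_{k=1}^{m+1} \frac{\lambda^\ast_k}{u(x^\ast_k)},
\]
and the sum is uniformly bounded in $m$ by Theorem~\ref{stability}. Combining the two estimates yields a constant $\mathcal{C}$ independent of $m$ and $f$ such that $|e^\ast_{m+1}(f)| \le \mathcal{C} \, E_{2m-1}(f)_u$.

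There is no real obstacle here; the whole corollary is essentially a packaging of the stability theorem with the degree-of-exactness property. The only small point to check is that the best approximation polynomial $P^\ast$ exists in $\mathbb{P}_{2m-1}$ for $f \in C_u(\cD)$, which is standard since $\mathbb{P}_{2m-1}$ is finite dimensional and the weighted uniform norm is well defined on $f-P$ for every polynomial $P$ thanks to the boundedness of $u$ on $\cD$ and the behavior of $u$ at the boundary encoded in \eqref{pesou}.
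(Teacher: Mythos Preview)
Your proof is correct and follows essentially the same route as the paper: split $e^\ast_{m+1}(f-P)$ into the integral part and the quadrature-sum part, bound each by $\|(f-P)u\|_\infty$ times, respectively, $\int_\cD u^{-1}\,d\mu$ and $\sum_k \lambda^\ast_k/u(x^\ast_k)$, and invoke Theorem~\ref{stability} for the latter. The only cosmetic difference is that the paper works with an arbitrary $P\in\mathbb{P}_{2m-1}$ and then takes the infimum, whereas you pick the best approximant $P^\ast$ from the start.
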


\begin{proof}
The inequality \eqref{ineq2.4} can be shown following, mutatis mutandis,
the proof of Theorem 5.1.7 in~\cite{MMlibro}. We report the main steps of 
the proof for the convenience of the reader. By the exactness of formula 
$G^*_{m+1}$ and by \eqref{boundsum}, we have for each polynomial 
$P \in \mathbb{P}_{2m-1}$ that
\begin{equation*}
\begin{aligned}
 \vert e^*_{m+1}(f) \vert  &=  \vert e^*_{m+1}(f-P) \vert  \\
&= \left\lvert  \int_{\cD} [f(x)-P(x)] d\mu(x) -\sum_{k=1}^{m+1} \lambda^*_k
[f(x^*_k)-P(x^*_k)] \right\rvert  \\
& \leq \|[f-P]u\|_\infty \left[ \int_{\cD} \frac{d\mu(x)}{u(x)}+\sum_{k=1}^{m+1} \frac{\lambda^*_k}{u(x^*_k)} \right] \\
& \leq \mathcal{C}' \, \|[f-P]u\|_\infty   \int_{\cD}  \frac{d\mu(x)}{u(x)} 
\end{aligned} 
\end{equation*} 
for some constant $\mathcal{C}'$ related to the constant $\mathcal{C}$ in 
\eqref{boundsum}. Taking the infimum with respect to $P$, we obtain the assertion.
\end{proof}

\begin{example}\rm
Let the function $f$ belong to the function space $C_u([-1,1])$ with
\begin{equation}\label{weightu}
u(x)=(1-x)^\gamma (1+x)^\delta, \qquad \gamma,\delta\geq 0,
\end{equation}
and assume that we would like to evaluate \eqref{integral}, where the measure
$d\mu(x)=w(x)dx$ is determined by the Jacobi weight function 
\begin{equation*}
w(x) =(1-x)^\alpha (1+x)^\beta, \qquad \alpha,\beta>-1.
\end{equation*}
Then the first inequality in \eqref{pesou} is satisfied if 
\begin{equation}\label{abcond}
\gamma<\alpha+1, \qquad  \delta<\beta+1,
\end{equation}
and Theorem \ref{stability} holds true. 

Now, let instead $f$ be a smoother function, namely, let $f$ belong to the
Sobolev space $W^r_u([-1,1])$ of index $r \geq 1$, 
defined in \eqref{Sobolev}.
Then~\cite[p.~172]{MMlibro}, 
$$
E_m(f)_u \leq \mathcal{C} m^{-r} \|f^{(r)} \varphi^ru\|_\infty,
$$
where $\varphi$ is given in \eqref{phix}. Corollary~\ref{corollary} then yields
$$ \vert e^*_{m+1}(f) \vert  \leq \frac{C}{m^r} \|f^{(r)} \varphi^ru\|_\infty.$$
\end{example}

\begin{example}\rm
Let the function $f$ belong to the function space $C_u([0,\infty))$ with
$$
u(x)=x^\gamma (1+x)^\delta e^{-x}, \qquad \gamma,\delta\geq 0,
$$
and assume that we would like to evaluate \eqref{integral}, where the measure
$d\mu(x)=w(x)dx$ is determined by the Laguerre weight function 
\begin{equation*}
w(x) =x^\alpha e^{-x}, \qquad \alpha>-1.
\end{equation*}
Then the first inequality in \eqref{pesou} is satisfied if $\alpha-\gamma>-1$
and $\delta>1+\alpha-\gamma$, and Theorem \ref{stability} holds true.

Additionally, let us consider the weighted Sobolev space $W^r_u([0,\infty))$ of
index $r \geq 1$; see~\eqref{Sobolev}--\eqref{phix}.
Then,~\cite[p.\,177]{MMlibro} yields
$$ \vert e^*_{m+1}(f) \vert  \leq \frac{C}{m^{r/2}} \|f^{(r)} \varphi^ru\|_\infty.$$
\end{example}

We conclude this section by showing the stability and convergence of the
quadrature rule $\widehat{A}_{2m+1}$.

\begin{corollary}\label{stabAhat}
Under the assumptions of Theorem~\ref{stability}, formula $\widehat{A}_{2m+1}$ is stable, i.e.,
$$
\sup_m \left( \sup_{\|fu\|_\infty=1} \vert \widehat{A}_{2m+1}(f) \vert  \right)
=\sup_m \left( \sum_{k=1}^{2m+1} \frac{\widehat{\lambda}_k}{u(\widehat{x}_k)}
\right) <\infty,
$$
and convergent, i.e.,
$$
\left\lvert I(f)-\widehat{A}_{2m+1}(f)\right\rvert \leq \mathcal{C} E_{2m-1}(f)_u,
\qquad \mathcal{C}\neq \mathcal{C}(m,f).
$$ 
\end{corollary}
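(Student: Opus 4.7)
The plan is to exploit the representation \eqref{G2m+1_bis}, which expresses $\widehat{A}_{2m+1}$ as a convex combination of $G_m$ and $G^*_{m+1}$, since the coefficients $\beta_{m+1}/(\beta_m+\beta_{m+1})$ and $\beta_m/(\beta_m+\beta_{m+1})$ are nonnegative and sum to one. The stability and convergence of the average will then follow from the corresponding properties of each constituent rule.

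For the stability assertion, I would first observe that by property (A) of $\widehat{A}_{2m+1}$ the nodes $\widehat{x}_k$ include all Gauss nodes $x_j^{(G)}$, so the assumption $u(\widehat{x}_k)\neq 0$ in particular gives $u(x_j^{(G)})\neq 0$, and by Theorem~\ref{prop1} the nodes $x_k^*$ interlace the Gauss nodes so $u(x_k^*)\neq 0$ as well. Matching the weights in \eqref{G2m+1_bis} yields
\[
\sum_{k=1}^{2m+1}\frac{\widehat{\lambda}_k}{u(\widehat{x}_k)}
=\frac{\beta_{m+1}}{\beta_m+\beta_{m+1}}\sum_{j=1}^{m}\frac{\lambda_j^{(G)}}{u(x_j^{(G)})}
+\frac{\beta_m}{\beta_m+\beta_{m+1}}\sum_{k=1}^{m+1}\frac{\lambda_k^*}{u(x_k^*)} .
\]
The second sum is bounded uniformly in $m$ by Theorem~\ref{stability}, while the first sum admits an identical uniform bound by the classical Markov--Stieltjes inequalities for Gauss rules (see~\cite{Freud}), applied exactly as in the proof of Theorem~\ref{stability}. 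Both convex coefficients lie in $[0,1]$, so the entire sum is bounded independently of $m$, which is the stated stability.

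For convergence, I would use that $\widehat{A}_{2m+1}$ has degree of exactness at least $2m+1$ by property (C), so in particular it is exact on $\mathbb{P}_{2m-1}$. For any $P\in\mathbb{P}_{2m-1}$,
\[
I(f)-\widehat{A}_{2m+1}(f)=I(f-P)-\widehat{A}_{2m+1}(f-P),
\]
and by the stability just established together with the first integrability condition in \eqref{pesou},
\[
\bigl|I(f)-\widehat{A}_{2m+1}(f)\bigr|
\leq \|(f-P)u\|_\infty\left(\int_\cD\frac{d\mu(x)}{u(x)}
+\sum_{k=1}^{2m+1}\frac{\widehat{\lambda}_k}{u(\widehat{x}_k)}\right)
\leq \mathcal{C}\|(f-P)u\|_\infty,
\]
with $\mathcal{C}$ independent of $m$ and $f$. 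Taking the infimum over $P\in\mathbb{P}_{2m-1}$ yields the claimed bound $\mathcal{C}\,E_{2m-1}(f)_u$.

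The only nontrivial ingredient is the uniform bound on $\sum_j \lambda_j^{(G)}/u(x_j^{(G)})$, but this is a well-established classical fact and the analogue of \eqref{boundsum} for Gauss rules; everything else is an algebraic consequence of \eqref{G2m+1_bis} and Theorem~\ref{stability}/Corollary~\ref{corollary}.
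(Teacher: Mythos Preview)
Your argument is correct and matches the paper's approach exactly: the paper's proof simply says that stability follows from the representation \eqref{G2m+1_bis}, the (classical) stability of $G_m$, and Theorem~\ref{stability}, and that convergence is shown as in Corollary~\ref{corollary}. You have faithfully expanded each of those steps.

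One small slip worth correcting: the assumption inherited from Theorem~\ref{stability} is $u(x_k^*)\neq 0$, not $u(\widehat{x}_k)\neq 0$, so your implication runs the wrong way. The correct reasoning is that $u$ is positive on the support of $d\mu$, hence $u(x_j^{(G)})\neq 0$ automatically, and combining this with the assumed $u(x_k^*)\neq 0$ gives $u(\widehat{x}_k)\neq 0$ for all $k$ (since the $\widehat{x}_k$ are exactly the union of the two node sets). This does not affect the validity of the rest of your argument.
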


\begin{proof}
Stability follows from the representation \eqref{G2m+1_bis}, the stability of the Gauss 
rule $G_m$, and Theorem~\ref{stability}. Convergence can be shown similarly as in
the proof of Corollary~\ref{corollary}.
\end{proof}

\section{Properties and performance of the considered quadrature rules}\label{sec3}
We analyze the quadrature rules \eqref{Gm+1}, \eqref{Gstar}, \eqref{averaged1}, and 
\eqref{G2m+1} for a few measures $d\mu(x)=w(x)dx$ with classical weight functions $w(x)$ 
that commonly arise in Fredholm integral equations of the second kind.

\subsection{Jacobi weight functions}
Let us consider polynomials \eqref{recurrence} that are orthogonal with respect
to the Jacobi weight function 
\begin{equation}\label{jw}
w(x)=(1-x)^\alpha (1+x)^\beta,
\end{equation}
for parameters $\alpha,\beta>-1$. The recursion coefficients $\alpha_k$ and $\beta_k$
in \eqref{recurrence} are explicitly known and can be expressed in terms of 
$\alpha$ and $\beta$ as follows:
\begin{equation*}
\begin{aligned}
\alpha_k&= \frac{\beta^2-\alpha^2}{(2k+\alpha+\beta)(2k+\alpha+\beta+2)}, 
& \quad k \geq 0,   \\
\beta_0 &= \frac{2^{\alpha+\beta+1} \Gamma(\alpha+1) \Gamma(\beta+1)}{\Gamma(\alpha+\beta+2)},   \\
\beta_k&= \frac{4k(k+\alpha)(k+\beta)(k+\alpha+\beta)}{(2k+\alpha+\beta)^2
((2k+\alpha+\beta)^2-1)}, & \quad k \geq 1,  
\end{aligned}
\end{equation*}
where $\Gamma(\cdot)$ denotes the Gamma function. If $\alpha^2=\beta^2$, then $\alpha_k=0$
for all $k\geq 0$. Moreover, when $\alpha,\beta\in\{-\frac{1}{2},\frac{1}{2}\}$, the
associated orthogonal polynomials are Chebychev polynomials of the first,
second, third, or fourth kinds, and $\beta_k=\frac{1}{4}$ for $k \geq 1$.
Since in this case $G^*_{m+1}\equiv\widetilde{G}_{m+1}$, we obtain from \eqref{G2m+1_bis} 
that
$$
\widehat{A}_{2m+1}
=\frac{1}{2}(G_m(f)+\widetilde{G}_{m+1}(f)), \qquad m\geq 2,
$$
i.e., the weighted averaged quadrature formula \eqref{G2m+1_bis} coincides with
the averaged formula \eqref{averaged1}.


For general $\alpha,\beta>-1$ in \eqref{jw}, we have 
\begin{equation}\label{betalim}
\lim_{m \to \infty} \beta_m=\frac{1}{4},
\end{equation}
and it follows that the coefficients for $G_m$ and $G_{m+1}^*$ in
\eqref{G2m+1_bis} tend to $\frac{1}{2}$ as $m$ increases, so that
$$
\lim_{m \to \infty} \left(\widehat{A}_{2m+1}(f) - \widetilde{A}_{2m+1}(f) \right) = 0.
$$
This implies that the quadrature rules $\widehat{A}_{2m+1}(f)$ and
$\widetilde{A}_{2m+1}(f)$ may produce significantly different results only for
small values of $m$.

\begin{example}\rm
Consider the integral
$$
\begin{aligned}
I_1& =\int_{-1}^1 x e^x \cos{(x+1)} \,dx=\dfrac{1+e^2 \cos{2}}{2e},\\
\end{aligned}
$$
The integral $I_1$ can be computed analytically. To illustrate the performance of the
quadrature rules without influence of round-off errors introduced during the computations, 
we carry out all computations of this section in high-precision arithmetic. Results 
determined in standard  double precision arithmetic are very close to those reported.

Table~\ref{table:I1} displays, for the integral $I_1$ and several small
values of $m$, the quadrature errors obtained by the Gauss rule $G_m$, the
anti-Gauss formula $\widetilde{G}_{m+1}$, the rule $G^*_{m+1}$, the averaged
formula $\widetilde{A}_{2m+1}$, and the weighted averaged rule
$\widehat{A}_{2m+1}$. The weighted averaged rule $\widehat{A}_{2m+1}$ can 
be seen to produce a more accurate approximation of $I_1$ than
$\widetilde{A}_{2m+1}$ for all values of $m$.
It also can be observed that the anti-Gauss rule $\widetilde{G}_{m+1}$ and
the rule $G^*_{m+1}$ give quadrature errors of opposite sign to that of the 
corresponding Gauss rule $G_m$.

\begin{table}[ht]
\begin{center}
\caption{Quadrature errors for the integral $I_1$.}
\label{table:I1}
\begin{tabular}{c|ccc|cc}
\hline
$m$ & $I_1-G_m$ & $I_1-\widetilde{G}_{m+1}$ & $I_1-G^*_{m+1}$ 
& $I_1-\widetilde{A}_{2m+1}$ & $I_1-\widehat{A}_{2m+1}$ \\
\hline
2 & $-7.93e-02$ &  $\phantom{-}7.93e-02$ &  $\phantom{-}7.65e-02$ & $-3.24e-05$ & 
$-7.88e-06$ \\ 
3 & $\phantom{-}6.29e-04$ & $-6.30e-04$ & $-6.21e-04$ & $-3.10e-07$ & 
$\phantom{-}3.00e-09$ \\ 
4 &  $\phantom{-}2.51e-05$ & $-2.51e-05$ & $-2.49e-05$ &  $\phantom{-}2.95e-10$ &  
$\phantom{-}1.73e-11$ \\ 
5 & $-4.77e-08$ & $\phantom{-}4.77e-08$ &  $\phantom{-}4.76e-08$ &  $\phantom{-}2.49e-12$
& $-7.36e-15$ \\ 
6 & $-8.10e-10$ &  $\phantom{-}8.10e-10$ &  $\phantom{-}8.08e-10$ & $-1.29e-15$ & 
$-3.84e-17$ \\ 
\hline
\end{tabular}
\end{center}
\end{table}


In Table \ref{table:I1}, as well as in the remainder of this section, the rule 
$\widehat{A}_{2m+1}$ was computed according to \eqref{G2m+1_bis}.


The rules $\widetilde{A}_{2m+1}$ and $\widehat{A}_{2m+1}$ can be used to 
estimate the quadrature error $(I-G_m)(f)$. A comparison of Table~\ref{table:I12} with 
the second columns of Table~\ref{table:I1} shows these error estimates to be quite accurate.

\begin{table}[ht]
\begin{center}
\caption{Quadrature error estimates for $G_m$ obtained by the averaged rules
for the integral $I_1$.}
\label{table:I12}
\begin{tabular}{ccc}
\hline
$m$ & $\widetilde{A}_{2m+1}-G_m$ & $\widehat{A}_{2m+1}-G_m$ \\
\hline
2 & $-7.93e-02$ & $-7.93e-02$ \\ 
3 & $\phantom{-}6.29e-04$ &  $\phantom{-}6.29e-04$ \\ 
4 & $\phantom{-}2.51e-05$ &   $\phantom{-}2.51e-05$ \\ 
5 & $-4.77e-08$ & $-4.77e-08$ \\ 
6 & $-8.10e-10$ & $-8.10e-10$ \\ 
\hline
\end{tabular}
\end{center}
\end{table}

\end{example}

%
%

\subsection{Generalized Laguerre weight functions}
We consider the situation when the sequence of monic orthogonal polynomials
$\{p_m\}_{m=0}^\infty$ are generalized Laguerre polynomials~\cite{Gautschi04,MMlibro}, 
i.e., they satisfy \eqref{orthonormality} with respect to the domain $\cD=\mathbb{R}^+$ 
and the measure $d\mu(x)=x^\alpha e^{-x}dx$ for some $\alpha>-1$. The recursion 
coefficients are given by
$$
\begin{aligned}
\alpha_k &= 2k+\alpha+1, \qquad k\geq 0, \\
\beta_0 & =\Gamma(1+\alpha), \qquad \beta_k=k(k+\alpha), \qquad k\geq 1.
\end{aligned}
$$
It is easy to see that
\[
\frac{\beta_{m+1}}{\beta_m+\beta_{m+1}} \to \frac{1}{2}\mbox{~~~and~~~}
\frac{\beta_{m}}{\beta_m+\beta_{m+1}} \to \frac{1}{2}\mbox{~~~as~~~} m\to\infty.
\]


\begin{example}\rm
Regard the integral 
\[
I_2=\int_0^\infty \frac{1}{(x-2)^2+4} w(x)\, dx, \qquad w(x) = \sqrt{x} e^{-x},
\]
whose exact solution is approximated by a Gauss rule with 1024 nodes.
Table \ref{table:I3} displays quadrature errors for this integral. The averaged rules 
can be seen to yield one or two more correct decimal digits than the corresponding Gauss
rule. In this example the averaged rule produces higher accuracy than the weighted
averaged rule with the same number of nodes.

\begin{table}[ht]
\begin{center}
\caption{Quadrature errors for the integral $I_2$.}
\label{table:I3}
\begin{tabular}{c|ccc|cc}
\hline
$m$ & $I_2-G_m$ & $I_2-\widetilde{G}_{m+1}$ & $I_2-G^*_{m+1}$ 
& $I_2-\widetilde{A}_{2m+1}$ & $I_2-\widehat{A}_{2m+1}$ \\
\hline
8 &  $\phantom{-}2.55e-04$ & $-2.83e-04$ & $-1.92e-04$ & $-1.38e-05$ & 
$\phantom{-}5.72e-05$ \\ 
16 & $-4.40e-06$ & $\phantom{-}2.73e-06$ &  $\phantom{-}9.11e-06$ & $-8.37e-07$ & 
$\phantom{-}1.95e-06$ \\ 
32 & $\phantom{-}2.59e-07$ & $-2.44e-07$ & $-3.01e-07$ & $\phantom{-}7.39e-09$ &
$-1.27e-08$ \\ 
64 &  $\phantom{-}2.54e-10$ & $-2.76e-10$ & $-1.87e-10$ & $-1.10e-11$ & 
$\phantom{-}3.72e-11$ \\ 
128 & $-1.53e-13$ & $\phantom{-}1.51e-13$ & $\phantom{-}1.60e-13$ & $-1.33e-15$ & 
$\phantom{-}2.08e-15$ \\ 
\hline
\end{tabular}
\end{center}
\end{table}
\end{example}

\subsection{The Hermite weight function}
We consider the measure 
$$
d \mu(x)=e^{-x^2} dx.
$$
The monic Hermite
orthogonal polynomials satisfy \eqref{recurrence} with the coefficients
$$
\begin{aligned}
\alpha_k &= 0, \qquad k\geq 0, \\
\beta_0 & =\sqrt{\pi}, \qquad \beta_k=\frac{k}{2}, \qquad k\geq 1.
\end{aligned}
$$
Then
$$
\frac{\beta_{m+1}}{\beta_m+\beta_{m+1}}=\frac{m+1}{2m+1} \to \frac{1}{2},
\qquad
\frac{\beta_{m}}{\beta_m+\beta_{m+1}}=\frac{m}{2m+1} \to \frac{1}{2}
$$
as $m\to\infty$.

\begin{example}\rm
Consider the integral
$$
\begin{aligned}
I_3  &= \int_{\mathbb{R}} \cosh(x) \, w(x)\dx, \\
\end{aligned}
$$ 
with $w(x)=e^{-x^2}$. The exact value is approximated by a Gauss rule with 512 
nodes. The quadrature errors reported in Table~\ref{table:I4} show that also for the
integral in the present example, the averaged rules yield higher accuracy than the 
underlying Gauss rules, and the weighted averaged formula is superior to the averaged one.
This is due to the smoothness of the integrand.

\begin{table}[ht]
\begin{center}
\caption{Quadrature errors for the integral $I_3$.}
\label{table:I4}
\begin{tabular}{c|ccc|cc}
\hline
$m$ & $I_3-G_m$ & $I_3-\widetilde{G}_{m+1}$ & $I_3-G^*_{m+1}$ 
& $I_3-\widetilde{A}_{2m+1}$ & $I_3-\widehat{A}_{2m+1}$ \\
\hline
2 &  $4.15e-02$ & $-4.01e-02$ & $-6.22e-02$ &  $7.41e-04$ & $\phantom{-}5.64e-05$ \\ 
4 &  $7.41e-05$ & $-7.32e-05$ & $-9.26e-05$ &  $4.37e-07$ & $\phantom{-}2.39e-08$ \\ 
6 &  $4.69e-08$ & $-4.66e-08$ & $-5.46e-08$ &  $1.35e-10$ & $\phantom{-}5.76e-12$ \\ 
8 &  $1.50e-11$ & $-1.50e-11$ & $-1.69e-11$ &  $2.40e-14$ & $-8.88e-16$ \\ 
\hline
\end{tabular}
\end{center}
\end{table}


\end{example}

\section{Averaged and weighted averaged Nystr\"om-type interpolants}\label{sec:nystrom}
This section describes several ways to apply the averaged and weighted averaged
quadrature rules to compute and evaluate an approximate solution of the
integral equation \eqref{Fredholm} in suitable weighted spaces $C_u(\cD)$.
The consideration of the equations in weighted spaces is 
crucial in order to include the cases when the kernel, right-hand side, and 
the solution may be unbounded at some boundary points of the domain $\cD$.

Let the quadrature formula $K_m$ employed in \eqref{I+Km} be the
$m$-point Gauss rule $G_m$~\eqref{Gn}. Then, after multiplying both sides by $u(x_i^{(G)})$, the system \eqref{sys} becomes 
\begin{equation}\label{Gsys}
\sum_{j=1}^{m} \left[\delta_{ij}+ \lambda_j^{(G)}
\frac{u(x_i^{(G)})}{u(x_j^{(G)})} k(x_j^{(G)},x_i^{(G)})\right]
{a}_j^{(G)}=(g^{(G)}u)(x_i^{(G)}),
\qquad i=1,2,\dots,m,
\end{equation}
with $a^{(G)}_j=(f_m^{(G)}u)(x_j^{(G)})$, $1\leq j\leq m$, and we determine an 
approximate solution of \eqref{Fredholm} by using the weighted Nystr\"om interpolant
\begin{equation}\label{fm}
(f_{m}^{(G)}u)(y)=(g^{(G)}u)(y)-u(y)\sum_{j=1}^{m} \frac{\lambda_j^{(G)}}{u(x_j^{(G)})} k(x_j^{(G)},y)
a^{(G)}_j,\qquad y\in\cD.
\end{equation}
 We note for
future reference that the linear system of equations \eqref{Gsys} can be
expressed as
\begin{equation}\label{system2}
(I_m+D_m^{(G)} \Phi^{(G)}({D_m^{(G)}})^{-1})\bm{a}^{(G)}=\bm{g}^{(G)},
\end{equation}
where $I_m$ is the identity matrix of order $m$,
\[
D_m^{(G)}=\diag(u(x_1^{(G)}),\ldots,u(x_m^{(G)})),
\]
the unknown vector is
$\bm{a}^{(G)}=[a_1^{(G)},a_2^{(G)},\ldots,a^{(G)}_m]^T \in \RR^m$, 
and the entries of the matrix
$\Phi^{(G)}=[\phi^{(G)}_{ij}]\in\RR^{m\times m}$ and right-hand side vector
$\bm{g}^{(G)}=[g_i^{(G)}]\in\RR^m$ are given by
$$
\begin{aligned}
\phi^{(G)}_{ij}&=\lambda_j^{(G)} k(x^{(G)}_j,x^{(G)}_i),\qquad 
&& i,j=1,2,\ldots,m,\\
g_i^{(G)}&=(gu)(x_i^{(G)}),\qquad && i=1,2,\ldots,m.
\end{aligned}
$$
We tacitly assume that $m$ is large enough so that the system \eqref{system2}
has a unique solution; see~\cite[Theorem~4.1.2]{A}. The computation of this
solution by LU factorization of the matrix
$I_m+D_m^{(G)}\Phi^{(G)}({D_m^{(G)}})^{-1}$ requires about
$\frac{2}{3}m^3$ arithmetic floating point operations (flops); see,
e.g.,~\cite[Lecture 20]{TB}. 

A Nystr\"om method based on the anti-Gauss quadrature rule \eqref{Gm+1} was
recently described in~\cite{DFR2020}, where the interpolant corresponding to
the $(2m+1)$-point averaged quadrature rule \eqref{averaged1} was studied.

In the following, we discretize the integral equation \eqref{Fredholm} by the
$(2m+1)$-point weighted averaged quadrature rule that is associated
with the $m$-point Gauss rule, and define a corresponding weighted Nystr\"om
interpolant. This interpolant gives higher accuracy than the Nystr\"om
interpolant defined by the $m$-point Gauss rule. The difference between the
approximate solutions of \eqref{Fredholm} furnished by the Nystr\"om
interpolants associated with the $m$-point Gauss rule and the corresponding
$(2m+1)$-point weighted averaged quadrature formula is used to
estimate the error in the approximate solution obtained by the Gauss--Nystr\"om
interpolant. This approach of estimating the error is analogous to the
technique used in Section \ref{sec3}.

\subsection{A weighted averaged Nystr\"om interpolant}

We consider the Nystr\"om interpolant \eqref{eqfm} that is determined by the 
$(2m+1)$-point weighted averaged quadrature rule $\widehat{A}_{2m+1}$
\eqref{G2m+1} associated with the $m$-point Gauss rule \eqref{Gn} used in
\eqref{Gsys}. The determination of this interpolant requires the solution of
the equation
\begin{equation}\label{eqAhat}
(I+\widehat{K}_{2m+1})\widehat{f}_{2m+1}^{[1]}=g,
\end{equation}
with 
\begin{equation}\label{kappahat}
(\widehat{K}_{2m+1} f)(y)=\sum_{j=1}^{2m+1} \widehat{\lambda}_j
k(\widehat{x}_j,y) f(\widehat{x}_j),
\end{equation}
that is, the solution of the linear system of equations with a matrix of order
$2m+1$,
\begin{equation}\label{system1}
\sum_{j=1}^{2m+1} \left[\delta_{ij}+\widehat{\lambda}_j \frac{u(\widehat{x}_i)}{u(\widehat{x}_j)}
k(\widehat{x}_j,\widehat{x}_i)\right]\widehat{a}_j=(gu)(\widehat{x}_i), \qquad i=1,2,\dots,2m+1,
\end{equation}
with $\widehat{a}_j=(\widehat{f}_{2m+1}^{[1]}u)(\widehat{x}_j)$.
We assume as usual that $m$ is large enough so that this system has a unique solution 
$\widehat{\bm{a}}=[\widehat{a}_1,\widehat{a}_2,\ldots,\widehat{a}_{2m+1}]^T$, which 
determines the weighted averaged Nystr\"om interpolant
\begin{equation}\label{eqfhat}
(\widehat{f}_{2m+1}^{[1]}u)(y)=(gu)(y)-u(y)\sum_{j=1}^{2m+1} \frac{\widehat{\lambda}_j}{u(\widehat{x}_j)} k(\widehat{x}_j,y)
\widehat{a}_j, \qquad y \in \cD.
\end{equation}
We will use the difference $(\widehat{f}_{2m+1}^{[1]}(y)-f_m^{(G)}(y))u(y)$ 
as an estimate of the error in $(f_m^{(G)}u)(y)$.

Introduce the matrices
$D_{2m+1}=\diag(u(\widehat{x}_1),\ldots,u(\widehat{x}_{2m+1}))$ and
$\Phi=[\phi_{ij}]\in\RR^{(2m+1)\times(2m+1)}$
with entries 
\[
\phi_{ij}=[\widehat{\lambda}_j k(\widehat{x}_j,\widehat{x}_i)],\qquad i,j=1,2,\ldots,2m+1,
\]
and the vector $\bm{g}=[(gu)(\widehat{x}_i)]\in\RR^{2m+1}$. Then the linear
system of equations 
\eqref{system1} can be written as
\begin{equation}\label{phisys1}
(I_{2m+1}+D_{2m+1}\Phi D_{2m+1}^{-1})\widehat{\bm{a}}=\bm{g}.
\end{equation}
The solution of this linear system by LU factorization requires about
$\frac{16}{3}m^3$ flops. Thus, the total computational effort required to solve
both the systems \eqref{system2} and \eqref{phisys1} is about $\frac{18}{3}m^3$
flops.

\begin{theorem}\label{teo:1}
Assume that $\mathcal{N}(I+K)=\{0\}$ in $C_u(\cD)$ and let $f$ be the unique solution of 
equation \eqref{Fredholm} for each given right-hand side $g\in C_u(\cD)$. If 
$$\int_{\cD} \frac{d\mu(x)}{u^2(x) }< \infty$$
and the kernel function $k$ satisfies
$$\sup_{y \in \cD} u(y) \|k(\cdot,y)\|_{W^r_u(\cD)}< \infty, \qquad \sup_{x \in \cD} u(x) \|k(x,\cdot)\|_{W^r_u(\cD)}< \infty,$$
then equation \eqref{eqAhat} has a unique solution 
$\widehat{f}_{2m+1}^{[1]}\in C_u(\cD)$ for $m$ sufficiently large.

If, in addition, the right-hand side satisfies $g \in W_u^r(\cD)$, then 
$\| (f - \widehat{f}_{2m+1}^{[1]})u \|_\infty$ tends to zero as the error of best 
polynomial approximation in $W_u^r(\cD)$.

Finally, the $\infty$-norm condition number of the coefficient matrix 
in \eqref{phisys1}
is bounded independently of $m$, for $m$ sufficiently large.
\end{theorem}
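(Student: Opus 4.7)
The plan is to cast the three claims within the standard framework for Nyström methods on collectively compact families of operators. Since the measure $d\mu$ and the kernel satisfy the stated integrability assumptions, $K\colon C_u(\cD)\to C_u(\cD)$ is compact, and $\mathcal N(I+K)=\{0\}$ gives that $I+K$ is invertible with a bounded inverse. The whole argument reduces to showing that the sequence $\{\widehat K_{2m+1}\}$ defined in \eqref{kappahat} behaves, as a family on $C_u(\cD)$, well enough that a standard perturbation lemma (e.g., Atkinson~\cite{A}, Thm.~4.1.1) applies.

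The first step is to verify the uniform bound
\[
\sup_m \|\widehat K_{2m+1}\|_{C_u\to C_u}
=\sup_m\sup_{y\in\cD} u(y)\sum_{j=1}^{2m+1}\frac{\widehat\lambda_j}{u(\widehat x_j)}\,|k(\widehat x_j,y)|<\infty.
\]
This follows by combining the stability of $\widehat A_{2m+1}$ in the weighted norm (Corollary~\ref{stabAhat}) with the hypothesis $\sup_{y}u(y)\|k(\cdot,y)\|_{W^r_u}<\infty$, which controls the weighted values $|k(\widehat x_j,y)|/u(\widehat x_j)$ uniformly in $j$ and $y$. The second step is pointwise convergence: for every $f\in C_u(\cD)$ and every $y\in\cD$, the integrand $x\mapsto k(x,y)f(x)$ lies in the space handled by Corollary~\ref{corollary}/Corollary~\ref{stabAhat}, so
\[
\bigl|(\widehat K_{2m+1}f - Kf)(y)\bigr|\,u(y)\le \mathcal C\, u(y)\,E_{2m-1}\bigl(k(\cdot,y)f(\cdot)\bigr)_u\longrightarrow 0,
\]
and the smoothness hypothesis on $k$ makes this convergence uniform in $y$. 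Combined with compactness of $K$ and the fact that each $\widehat K_{2m+1}$ is a finite-rank operator whose range has uniformly bounded norm, the family $\{\widehat K_{2m+1}\}$ is collectively compact; the standard Anselone/Atkinson perturbation lemma then gives the first conclusion: $(I+\widehat K_{2m+1})^{-1}$ exists for all $m$ sufficiently large and $\sup_m\|(I+\widehat K_{2m+1})^{-1}\|_{C_u\to C_u}<\infty$.

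The convergence-rate claim comes from the identity
\[
f-\widehat f_{2m+1}^{[1]}=(I+\widehat K_{2m+1})^{-1}\bigl(Kf-\widehat K_{2m+1}f\bigr),
\]
after noting that if $g\in W^r_u(\cD)$ and $k$ satisfies the stated $W^r_u$-bounds, then the solution $f=g-Kf$ inherits membership in $W^r_u(\cD)$. Taking the uniformly bounded inverse out in front and applying the error estimate \eqref{ineq2.4} to the integrand $k(\cdot,y)f(\cdot)$ (uniformly in $y$) yields $\|(f-\widehat f_{2m+1}^{[1]})u\|_\infty\le \mathcal C\,E_{2m-1}(\cdot)_u$ with the rate of best polynomial approximation in $W^r_u(\cD)$.

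Finally, the condition-number bound is obtained by identifying the action of the matrix in \eqref{phisys1} on $\RR^{2m+1}$ equipped with the $\infty$-norm with the action of $I+\widehat K_{2m+1}$ on $C_u$ evaluated at the nodes $\widehat x_i$. Indeed
\[
\|I_{2m+1}+D_{2m+1}\Phi D_{2m+1}^{-1}\|_\infty=\max_{i}\Bigl(1+u(\widehat x_i)\sum_{j=1}^{2m+1}\frac{\widehat\lambda_j}{u(\widehat x_j)}|k(\widehat x_j,\widehat x_i)|\Bigr)
\le 1+\|\widehat K_{2m+1}\|,
\]
which is uniformly bounded by Step~1. For the inverse, a standard argument realises $(I_{2m+1}+D_{2m+1}\Phi D_{2m+1}^{-1})^{-1}$ as a Nyström restriction of $(I+\widehat K_{2m+1})^{-1}$, so its $\infty$-norm is controlled by the uniformly bounded operator norm established above. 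Multiplying the two bounds gives the desired conclusion. The main obstacle I expect is the collective-compactness step — specifically, the uniform (in $y$) control of the quadrature error for the family of integrands $\{k(\cdot,y)f(\cdot)\}_{y\in\cD}$ in the weighted norm when $\cD$ is unbounded; this is where the smoothness hypothesis $\sup_y u(y)\|k(\cdot,y)\|_{W^r_u}<\infty$ is crucial and has to be used carefully together with the Markov–Stieltjes-type bound \eqref{bound1}.
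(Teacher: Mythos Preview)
Your overall strategy matches the paper's: both rely on the Atkinson/Anselone perturbation framework in~\cite[Chapter~4]{A}, reducing everything to (i) pointwise convergence $\|(K-\widehat K_{2m+1})f\,u\|_\infty\to 0$ and (ii) a collective-compactness-type property of $\{\widehat K_{2m+1}\}$, after which invertibility, error rate, and the condition-number bound follow exactly as you sketch.

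There is, however, a genuine gap in your justification of collective compactness. You write that it follows ``combined with compactness of $K$ and the fact that each $\widehat K_{2m+1}$ is a finite-rank operator whose range has uniformly bounded norm.'' That inference is not valid: finite rank plus a uniform operator-norm bound does \emph{not} make $\bigcup_m \widehat K_{2m+1}(B)$ relatively compact in $C_u(\cD)$. What is needed is a uniform equicontinuity/approximability statement for the images, and this is precisely where the \emph{second} kernel hypothesis $\sup_{x}u(x)\|k(x,\cdot)\|_{W^r_u}<\infty$ (smoothness in the $y$-variable) enters --- a hypothesis you never actually use. The paper supplies this via its condition~(B): it shows directly that $\widehat K_{2m+1}$ maps $C_u(\cD)$ into $W^r_u(\cD)$ with a bound independent of $m$, by differentiating \eqref{kappahat} in $y$ and invoking the stability bound of Corollary~\ref{stabAhat} with weight $u^2$. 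From this, $\sup_m\sup_{\|fu\|\le 1}E_M(\widehat K_{2m+1}f)_u\to 0$ as $M\to\infty$, which is the missing compactness ingredient and yields $\|(K-\widehat K_{2m+1})\widehat K_{2m+1}\|\to 0$.

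A smaller point: in your pointwise-convergence step the product $k(\cdot,y)f(\cdot)$ lies in $C_{u^2}(\cD)$, not $C_u(\cD)$, so the relevant quadrature-error bound is $E_{2m-1}(k(\cdot,y)f)_{u^2}$ and one must apply Corollary~\ref{stabAhat} with $u^2$ in place of $u$ --- this is why the hypothesis is $\int_\cD d\mu/u^2<\infty$ rather than $\int_\cD d\mu/u<\infty$. The paper then splits this product error via the inequality $E_m(fg)_{u^2}\le 2(\|fu\|_\infty E_m(g)_u+\|gu\|_\infty E_m(f)_u)$ to obtain the uniform-in-$y$ rate.
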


\begin{proof}
To prove the first part of the theorem we have to show that
\begin{itemize}
\item[(A)] $\displaystyle \lim_{m \to \infty} \|(K-\widehat{K}_{2m+1})f u \|_\infty=0$ for
each $f \in C_u(\cD)$,
\item[(B)] $\displaystyle \lim_{M \to \infty} \displaystyle \left(\sup_{m} 
\left[\sup_{\|fu\| \leq 1} E_{M}(\widehat{K}_{2m+1}f)_u\right] \right)=0$.  
\end{itemize}
Let us first prove (A). We observe that the function $k(\cdot,y)f$ is in $C_{u^2}(\cD)$. By 
applying Corollary~\ref{stabAhat} to this function with $u^2$ in place of $u$, we deduce
\begin{equation*}
\left|(Kf)(y)-(\widehat{K}_{2m+1}f)(y)\right| \leq \mathcal{C} E_{2m-1}(k(\cdot,y) f)_{u^2}.
\end{equation*}
Therefore, multiplying both sides times $u(y)$ and
using~\cite[Equation 4.1]{FermoRusso},
\begin{equation*}
E_m(fg)_{u^2} \leq 2 \left[ \|fu\|_\infty E_m(g)_u 
+ \|gu\|_\infty E_m(f)_u \right],
\end{equation*}
we obtain
\begin{multline}\label{convergence}
\left|[(Kf)(y)-(\widehat{K}_{2m+1}f)(y)] u(y)\right| \leq \mathcal{C} \left[ \sup_{y \in \cD} u(y) \|
k(\cdot,y)u\|_\infty E_{m-1}(f)_u \right. \\ 
\left.+\|fu\|_\infty \sup_{y \in \cD} u(y) E_{m-1}(k(\cdot,y))_u \right],
\end{multline}
which tends to zero as $m$ increases by the assumptions on $k(\cdot,y)$ and $f$.

To prove (B), we show that $\widehat{K}_{2m+1}$ maps $C_u(\cD)$ into $W_u^r(\cD)$.
Indeed, from \eqref{kappahat},
\begin{multline*}
\left|(\widehat{K}_{2m+1}f)^{(r)}(y)\varphi^r(y) u(y) \right| \\
\begin{aligned}
&\leq \sum_{j=1}^{2m+1}
\frac{\widehat{\lambda}_j}{u^2(\widehat{x}_j)} u(\widehat{x}_j)
\left|k(\widehat{x}_j,y)^{(r)} \varphi^r(y) u(y)\right| \cdot |f(\widehat{x}_j)
u(\widehat{x}_j)| \\ 
&\leq \|fu\|_\infty \sup_{x \in \cD} u(x)
\left\|k(x,\cdot)^{(r)}\varphi^ru\right\|_\infty \sum_{j=1}^{2m+1}
\frac{\widehat{\lambda}_j}{u^2(\widehat{x}_j)},
\end{aligned}
\end{multline*}
which is bounded by virtue of the assumptions on the kernel and on the weight function, 
and by taking Corollary \ref{stabAhat} into account. Hence, 
$\widehat{K}_{2m+1}f\in W_u^r(\cD)$ and we can deduce that  
$$
E_M(\widehat{K}_{2m+1}f) \leq \frac{\mathcal{C}}{M^{c r}} \|fu\|_\infty,
$$
where $c$ and $\mathcal{C}$ are positive constants independent of $m$, $M$, and $f$. 
Condition (B) now follows and, consequently, 
$\|(K-\widehat{K}_{2m+1})\widehat{K}_{2m+1}\|$ tends to zero as $m$ increases. This can be
seen by \eqref{convergence} with $\widehat{K}_{2m+1}f$ in place of $f$ and by applying our 
assumnptions on the kernel function.
Therefore by \cite[Chapter 4]{A}, the operator $I-\widehat{K}_{2m+1}$ is invertible in 
$C_u(\cD)$, for $m$ sufficiently large, and uniformly bounded. This completes the proof of
the first assertion.

The estimate of the error follows from \cite[Theorem 4.1.2]{A},
$$
\left\| (f - \widehat{f}_{2m+1}^{[1]})u \right\|_\infty \sim
\left\|[Kf-\widehat{K}_{2m+1}f] u\right\|_\infty,
$$
and by applying \eqref{convergence}.

A proof of the well-conditioning of the linear system is given in~\cite[p.\,113]{A}. We 
just have to replace the usual infinity norm by the weighted norm of the space $C_u(\cD)$.
\end{proof}

\subsection{An approximate Nystr\"om interpolant based on the splitting
\eqref{G2m+1_bis}} \label{approxint}
The splitting \eqref{G2m+1_bis} suggests the use of an approximate Nystr\"om interpolant
that is cheaper to compute than solving \eqref{phisys1}. The evaluation of the
approximate interpolant proceeds as follows:
\begin{enumerate}
\item[I.] Determine and solve the
linear system of equations \eqref{system2} associated with the $m$-point Gauss
rule. This yields the Nystr\"om interpolant $f_m^{(G)}u$ defined by \eqref{fm}.

\item[II.] Compute the $(m+1)$-point Nystr\"om interpolant that is associated
with the quadrature rule $G^*_{m+1}$ in \eqref{Gstar}, with nodes and weights 
$x^*_j$ and $\lambda^*_j$, respectively. Thus, we consider the equation
\begin{equation}\label{eqstar}
(I+K^*_{m+1})f_{m+1}^*=g,
\end{equation}
where 
$$(K^*_{m+1}f)(y)=\sum_{j=1}^{m+1}\lambda^*_j  k(x^*_j,y) f(x^*_j).$$
This leads to the Nystr\"om interpolant
\begin{equation}\label{f*m+1}
(f^*_{m+1}u)(y)=(gu)(y)-u(y)\sum_{j=1}^{m+1} \frac{\lambda^*_j}{u(x^*_j)} k(x^*_j,y) a^*_j,\qquad y\in\cD,
\end{equation}
whose coefficients $a^*_j=(f_{m+1}^*u)(x^*_j)$, $j=1,2,\ldots,m+1$, are the
unknowns in the linear system of equations
\[
\sum_{j=1}^{m+1} \left[\delta_{ij}+ \lambda^*_j \frac{u(x^*_i)}{u(x^*_j)}
k(x^*_j,x^*_i)\right] {a}^*_j=(gu)(x^*_i),
\qquad i=1,2,\dots,m+1.
\]

We can express the above linear system in the form
\begin{equation}\label{phisys*}
(I_{m+1}+D_{m+1}^*\Phi^*(D_{m+1}^*)^{-1})\bm{a}^*=\bm{g}^*,
\end{equation}
where $\bm{a}^*=[a_1^*,\ldots,a_{m+1}^*]^T$,
\[
D_{m+1}^*=\diag(u(x^*_1),\ldots,u(x^*_{m+1})),
\]
and the entries of the matrix $\Phi^*=[\phi^*_{ij}]\in\RR^{(m+1)\times(m+1)}$
and right-hand side vector $\bm{g}^*=[g_i^*]\in\RR^{m+1}$ are given by
\[
\begin{aligned}
\phi^*_{ij}&=\lambda_j^* k(x^*_j,x^*_i),\qquad &&i,j=1,2,\ldots,m+1, \\
g_i^*&=(gu)(x_i^*)\quad &&i=1,2,\ldots,m+1.
\end{aligned}
\]

\item [III.] Approximate the solution of the original equation \eqref{Fredholm} by a 
convex combination of the weighted Nystr\"om interpolants computed in steps I and II,
as suggested by the representation \eqref{G2m+1_bis} of the quadrature rule 
\eqref{G2m+1}:
\begin{equation}\label{interp2}
(\widehat{f}^{[2]}_{2m+1}u)(y)=\frac{\beta_{m+1}}{\beta_m+\beta_{m+1}} (f_m^{(G)}u)(y)
+\frac{\beta_{m}}{\beta_m+\beta_{m+1}} (f^*_{m+1}u)(y).
\end{equation}
\end{enumerate}

The determination of this interpolant requires the solution of two linear
systems of equations with matrices of orders $m$ and $m+1$, respectively.
Their solution demands about $\frac{4}{3}m^3$ flops. In particular, this
includes the computational effort required to evaluate the Gauss--Nystr\"om
interpolant \eqref{fm}. Hence, the calculation of the interpolant
\eqref{interp2} is cheaper than the computation of the interpolant
\eqref{eqfhat}. We will compare the accuracy of the approximations
$\widehat{f}^{[1]}_{2m+1}$ and  $\widehat{f}^{[2]}_{2m+1}$, defined by \eqref{eqfhat} and
\eqref{interp2}, respectively, of the solution $f$ of \eqref{Fredholm} in 
Section~\ref{examples}.

We observed in Section~\ref{sec:formulae} that for Chebychev measures, the coefficients 
$\frac{\beta_{m+1}}{\beta_m+\beta_{m+1}}$ and $\frac{\beta_m}{\beta_m+\beta_{m+1}}$ in
\eqref{G2m+1_bis} are $\frac{1}{2}$. When $m$ tends to $\infty$, these coefficients tend 
to $\frac{1}{2}$ as $m$ increases also for other measures. When the coefficients equal
$\frac{1}{2}$, the Nystr\"om interpolant \eqref{interp2} coincides with the averaged 
interpolant determined by the Gauss and anti-Gauss rules. The latter interpolant has been
investigated in~\cite{DFR2020}.

We conclude this subsection by showing convergence and stability of the Nystr\"om method 
based on the quadrature rule $G_{m+1}^*$. This yields \eqref{f*m+1}, and convergence of 
the interpolant \eqref{interp2}.

\begin{theorem}\label{teo:2}
Assume that $\mathcal{N}(I+K)=\{0\}$ in $C_u(\cD)$ and let $f$ be the unique solution of 
equation \eqref{Fredholm} for the right-hand side function $g\in C_u(\cD)$. If 
$$\int_{\cD} \frac{d\mu(x)}{u^2(x) }< \infty,$$
and the kernel function $k$ is such that
$$\sup_{y \in \cD} u(y) \|k(\cdot,y)\|_{W^r_u(\cD)}< \infty, \qquad 
\sup_{x \in \cD} u(x) \|k(x,\cdot)\|_{W^r_u(\cD)}< \infty,$$
then, for $m$ large enough, equation \eqref{eqstar} has a unique solution 
${f}_{m+1}^{*}\in C_u(\cD)$. Moreover, if the right-hand side $g$ is in $ W_u^r(\cD)$, 
then $\left\| (f - {f}_{m+1}^*)u \right\|_\infty$ tends to zero as $m$ increases as the
error of best polynomial approximation in $W_u^r(\cD)$.

Finally, the condition number of the matrix $(I_{m+1}+\Phi^*)$ in the $\infty$-norm is 
bounded independently of $m$, for $m$ sufficiently large.
\end{theorem}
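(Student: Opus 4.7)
The plan is to parallel the argument used to prove Theorem \ref{teo:1}, substituting the quadrature rule $G^*_{m+1}$ for $\widehat{A}_{2m+1}$ and invoking Theorem \ref{stability} together with Corollary \ref{corollary} in place of Corollary \ref{stabAhat}. Concretely, I would verify the standard collectively compact approximation hypotheses for the operator sequence $\{K^*_{m+1}\}$, namely (A) $\|(K-K^*_{m+1})f\,u\|_\infty\to 0$ for every $f\in C_u(\cD)$, and (B) $\sup_m\sup_{\|fu\|_\infty\le 1}E_M(K^*_{m+1}f)_u\to 0$ as $M\to\infty$. Once these are established, the unique solvability of $(I+K^*_{m+1})f^*_{m+1}=g$ for $m$ large and the uniform boundedness of $(I+K^*_{m+1})^{-1}$ follow from~\cite[Chapter 4]{A}.

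For (A) I would apply Corollary \ref{corollary} to the integrand $k(\cdot,y)f(\cdot)$ with $u^2$ in place of $u$, which is legitimate because the hypothesis $\int_\cD d\mu/u^2<\infty$ together with the assumptions on $k$ and $f$ ensure that $k(\cdot,y)f\in C_{u^2}(\cD)$ and that Theorem \ref{stability} holds with $u^2$. Multiplying the resulting estimate by $u(y)$ and invoking the product rule $E_m(fg)_{u^2}\le 2[\|fu\|_\infty E_m(g)_u+\|gu\|_\infty E_m(f)_u]$ from~\cite{FermoRusso}, exactly as in \eqref{convergence}, produces a bound that vanishes as $m\to\infty$. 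For (B) I would differentiate the finite sum defining $K^*_{m+1}f$ in $y$ up to order $r$, multiply by $\varphi^r(y)u(y)$, bound each summand using the smoothness hypothesis on the kernel, and factor out the $m$-independent quantity $\sum_j\lambda_j^*/u^2(x_j^*)$ via Theorem \ref{stability} applied to $u^2$; this shows that $K^*_{m+1}$ maps $C_u(\cD)$ uniformly into a bounded ball of $W_u^r(\cD)$, so that $E_M(K^*_{m+1}f)_u\le \mathcal{C}M^{-cr}\|fu\|_\infty$ with $\mathcal{C}$ independent of $m$ and $f$.

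With (A) and (B) in hand, the error estimate is obtained from the identity $(I+K^*_{m+1})(f-f^*_{m+1})=(K^*_{m+1}-K)f$ combined with~\cite[Theorem 4.1.2]{A}, which yields $\|(f-f^*_{m+1})u\|_\infty\sim\|(K-K^*_{m+1})f\,u\|_\infty$; the rate of best polynomial approximation then follows from the pointwise bound derived in (A) applied to the exact solution of \eqref{Fredholm}, whose $W_u^r$-regularity is inherited from $g$ through $(I+K)^{-1}$. The well-conditioning claim is obtained by transferring the uniform bound on $(I+K^*_{m+1})^{-1}$ to a matrix-norm bound via the argument of~\cite[p.\,113]{A}, with the unweighted uniform norm replaced by the weighted norm of $C_u(\cD)$, which on coefficient vectors corresponds to the scaling $D_{m+1}^*$ appearing in \eqref{phisys*}.

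The main obstacle I anticipate is the careful handling of the weight $u^2$: one must confirm that Theorem \ref{stability} indeed applies with $u^2$ in place of $u$ under the hypothesis $\int_\cD d\mu/u^2<\infty$, so that $\sup_m\sum_j\lambda_j^*/u^2(x_j^*)<\infty$, and then check that the product-rule estimate from~\cite{FermoRusso} combines correctly with the kernel smoothness hypothesis to produce pointwise bounds in $y$ that survive the supremum over $\cD$. Beyond this uniformity-in-$m$ bookkeeping, each step parallels the proof of Theorem \ref{teo:1} essentially verbatim, with $\widehat{A}_{2m+1}$ and Corollary \ref{stabAhat} replaced by $G^*_{m+1}$ together with Theorem \ref{stability} and Corollary \ref{corollary}.
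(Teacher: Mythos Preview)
Your proposal is correct and follows essentially the same approach as the paper: the paper's proof simply states that the assertions are proved as in Theorem~\ref{teo:1}, applying Corollary~\ref{corollary} in place of Corollary~\ref{stabAhat}, which is precisely what you outline (with Theorem~\ref{stability} underpinning the stability of $G^*_{m+1}$ in $C_{u^2}$). Your write-up is in fact more detailed than the paper's own one-line proof, and the only point you flag as a potential obstacle---the applicability of Theorem~\ref{stability} with $u^2$ in place of $u$---is handled exactly as in the proof of Theorem~\ref{teo:1}.
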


\begin{proof}
The assertions can be proved similarly as Theorem~\ref{teo:1}, by applying Corollary 
\ref{corollary} in place of Corollary \ref{stabAhat}.
\end{proof}

\smallskip
\begin{proposition}
Assume that $\mathcal{N}(I+K)=\{0\}$ in $C_u(\cD)$ and let $f$ be the unique solution of 
equation \eqref{Fredholm} for the right-hand side function $g\in C_u(\cD)$. Then, under 
the assumption of Theorem \ref{stability},
$$\lim_{m \to \infty} \|(f-\widehat{f}^{[2]}_{2m+1})u\|_\infty=0,$$
where $\widehat{f}^{[2]}_{2m+1}$ is given by \eqref{interp2}.
\end{proposition}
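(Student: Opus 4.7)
My plan is to observe that \eqref{interp2} exhibits $(\widehat{f}^{[2]}_{2m+1}u)(y)$ as an explicit convex combination of $(f_m^{(G)}u)(y)$ and $(f^*_{m+1}u)(y)$, with coefficients
\begin{equation*}
w_1(m) = \frac{\beta_{m+1}}{\beta_m+\beta_{m+1}}, \qquad w_2(m) = \frac{\beta_m}{\beta_m+\beta_{m+1}},
\end{equation*}
which are positive (since $\beta_m,\beta_{m+1}>0$) and satisfy $w_1(m)+w_2(m)=1$. I would use this identity to write $fu = w_1(m)\,fu + w_2(m)\,fu$, subtract \eqref{interp2} pointwise, and apply the triangle inequality, to obtain
\begin{equation*}
\|(f-\widehat{f}^{[2]}_{2m+1})u\|_\infty \leq w_1(m)\,\|(f-f_m^{(G)})u\|_\infty + w_2(m)\,\|(f-f^*_{m+1})u\|_\infty.
\end{equation*}
Since $0\leq w_i(m)\leq 1$ for $i=1,2$, the claim reduces to showing that each of the two norms on the right tends to zero as $m\to\infty$.

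The convergence $\|(f-f^*_{m+1})u\|_\infty\to 0$ follows from Theorem~\ref{teo:2}. Although that theorem was stated with a rate under the additional assumption $g\in W_u^r(\cD)$, the mere limit is obtained for every $g\in C_u(\cD)$ by repeating, for the operator $K^*_{m+1}$ in place of $\widehat{K}_{2m+1}$, the scheme used in the proof of Theorem~\ref{teo:1}: the Atkinson-type perturbation bound $\|(f-f^*_{m+1})u\|_\infty\sim\|(Kf-K^*_{m+1}f)u\|_\infty$ combined with the pointwise convergence $\|(Kf-K^*_{m+1}f)u\|_\infty\to 0$, itself a consequence of Corollary~\ref{corollary} applied with weight $u^2$. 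The analogous limit $\|(f-f_m^{(G)})u\|_\infty\to 0$ is the classical convergence of the Gauss--Nystr\"om method in $C_u(\cD)$; under the present hypotheses it follows by the same Atkinson framework with the $m$-point Gauss rule $G_m$ playing the role of $G^*_{m+1}$, using the standard stability and best-polynomial-approximation estimates for $G_m$ in the weighted uniform norm (which are classical and in fact simpler than those established here for $G^*_{m+1}$, because the Gauss weights satisfy the Markov--Stieltjes inequalities directly).

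Combining these two limits with the displayed convex-combination bound then yields $\|(f-\widehat{f}^{[2]}_{2m+1})u\|_\infty\to 0$, as claimed. The only mildly delicate point I anticipate is verifying the convergence (not just the rate) of $f^*_{m+1}$ and $f_m^{(G)}$ for arbitrary $g\in C_u(\cD)$ rather than for $g\in W_u^r(\cD)$; this is handled by the density of polynomials in $C_u(\cD)$ together with the uniform boundedness of $(I+K^*_{m+1})^{-1}$ and of its Gauss analogue, both of which are already provided by the preceding theorems, so no genuinely new estimate is required and the proof reduces to the triangle inequality above.
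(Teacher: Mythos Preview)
Your argument is essentially the same as the paper's: both split $f-\widehat{f}^{[2]}_{2m+1}$ via the convex combination \eqref{interp2}, apply the triangle inequality, and conclude from the separate convergence of $f_m^{(G)}$ and $f^*_{m+1}$. Your version is in fact slightly tidier, since you only use $0\le w_i(m)\le 1$ (the paper instead remarks that the weights tend to $\tfrac12$, which is not needed), and you are more careful than the paper in distinguishing convergence for $g\in C_u(\cD)$ from the rate stated under $g\in W_u^r(\cD)$.
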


\begin{proof}
By \eqref{interp2} we have
$$
\|(f-\widehat{f}^{[2]}_{2m+1})u\|_\infty \leq \frac{\beta_{m+1}}{\beta_m+\beta_{m+1}} 
\|(f-f_m^{(G)})u \|_\infty +\frac{\beta_{m}}{\beta_m+\beta_{m+1}} 
\|(f-f^*_{m+1})u\|_\infty,
$$
from which the assertion follows by considering that the coefficients tend to 
$\frac{1}{2}$ as $m \to \infty$ and by taking Theorems~\ref{teo:1} and \ref{teo:2} into 
account.
\end{proof}

\section{Iterative methods for the evaluation of the Nystr\"om interpolant 
\eqref{eqfhat}}\label{itmeth}
This section describes several iterative methods for computing
approximations of the Nystr\"om interpolant \eqref{eqfhat} that are more
accurate than the approximation described in the previous subsection.
This paper discusses some simple algorithms directly stemming from the
quadrature rules used to compute \eqref{eqfhat}. We prove their convergence
under the assumption that the weight of the space $C_u(\cD)$ is $u(x)=1$.
However, in Section \ref{examples}, we show by a numerical experiment that a
suitable choice of the weight $u$ may improve the rate of convergence. Other
iterative methods also could be employed, such as Krylov methods. Here, we
focus on methods that exploit the structure of the problem.

Using the notation introduced above, we set
\begin{eqnarray*}
\theta_m^{(1)}&=&\frac{\beta_{m+1}}{\beta_m+\beta_{m+1}}, \\
\theta_m^{(2)}&=&\frac{\beta_m}{\beta_m+\beta_{m+1}},\\
\Phi_{11}&=&\theta_m^{(1)} \Phi^{(G)},\\
\Phi_{22}&=&\theta_m^{(2)} \Phi^*,
\end{eqnarray*}
see \eqref{system2} and \eqref{phisys*}, and
$$
\begin{aligned}
(\Phi_{12})_{ij}&=\theta_m^{(2)}
\lambda_j^* k(x_j^*,x_i^{(G)}), & \qquad &i=1,2,\ldots,m,& j&=1,2,\ldots,m+1, \\
(\Phi_{21})_{ij}&=\theta_m^{(1)}
\lambda_j^{(G)} k(x_j^{(G)},x_i^*), & \qquad &i=1,2,\ldots,m+1,& j&=1,2,\ldots,m.
\end{aligned}
$$
Express the system \eqref{phisys1} as
\begin{equation}\label{split}
\begin{bmatrix}
I_m+\Phi_{11} & \Phi_{12} \\
\Phi_{21} & I_{m+1}+\Phi_{22} \end{bmatrix} 
\begin{bmatrix}\bm{b} \\ \bm{c} \end{bmatrix} =
\begin{bmatrix}\bm{g}^{(G)} \\ \bm{g}^* \end{bmatrix},
\end{equation}
where $\bm{b}=\bm{a}^{(G)}\in\RR^m$, $\bm{c}=\bm{a}^{*}\in\RR^{m+1}$.
The representation \eqref{split} suggests a few iterative solution methods.
The first one we consider is a modification of the method considered in
Subsection \ref{approxint}, which uses the computed LU factorizations in an
iterative fashion.
It is defined by
\begin{equation}\label{iter1}
\begin{aligned}
(I_m+\Phi_{11}) \bm{b}^{(k+1)} &= \bm{g} - \Phi_{12} \bm{c}^{(k)}, \\
(I_{m+1}+\Phi_{22}) \bm{c}^{(k+1)} &= \bm{g}^* - \Phi_{21} \bm{b}^{(k+1)},
\end{aligned}
\qquad k=0,1,2,\ldots.
\end{equation}

Since the method is stationary, the LU factorizations of the coefficient matrices can be
computed initially, and then used in each iteration. Computing the vector $\bm{c}^{(0)}$ 
by the $(m+1)$-point $G_{m+1}^*$ quadrature formula yields a quite accurate initial 
approximation of the second part of the solution. In actual computations, convergence is
typically achieved within a fairly small number of iterations. The following results give 
sufficient conditions for the convergence of the method.

\medskip
\begin{theorem}\label{thm1}
Let the kernel of \eqref{Fredholm} satisfy 
\begin{equation}\label{kinfty}
\|k\|_\infty = \sup_{y\in\cD}\|k(\cdot,y)\|_\infty < \beta_0^{-1},
\end{equation}
where $\beta_0=\langle p_0,p_0 \rangle$. Then, for a sufficiently large $m$, the iteration
process \eqref{iter1} converges to the vectors
\begin{equation}\label{itersol}
\bm{b} = [\hat{f}_{2m+1}^{[1]}(x_1^{(G)}), \ldots, 
	\hat{f}_{2m+1}^{[1]}(x_m^{(G)}) ]^T, \quad
\bm{c} = [ \hat{f}_{2m+1}^{[1]}(x_1^*), \ldots, 
	\hat{f}_{2m+1}^{[1]}(x_{m+1}^*) ]^T,
\end{equation}
that is, to the unique solution of system \eqref{phisys1}.
\end{theorem}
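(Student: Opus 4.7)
The plan is to interpret \eqref{iter1} as a block Gauss--Seidel iteration applied to \eqref{split} and to show that the associated error propagation operator is a strict contraction in the $\infty$-norm, uniformly in $m$.

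First, I would let $(\bm{b},\bm{c})$ denote the unique solution of \eqref{split} (which exists for $m$ sufficiently large by Theorem~\ref{teo:1}) and introduce the error vectors $\bm{e}_b^{(k)}:=\bm{b}^{(k)}-\bm{b}$ and $\bm{e}_c^{(k)}:=\bm{c}^{(k)}-\bm{c}$. Subtracting the fixed-point form of \eqref{iter1} from the iteration itself gives
$$
\bm{e}_b^{(k+1)}=-(I_m+\Phi_{11})^{-1}\Phi_{12}\,\bm{e}_c^{(k)},\qquad
\bm{e}_c^{(k+1)}=-(I_{m+1}+\Phi_{22})^{-1}\Phi_{21}\,\bm{e}_b^{(k+1)}.
$$
Therefore convergence reduces to bounding, in the maximum-row-sum norm, the operator
$M:=(I_{m+1}+\Phi_{22})^{-1}\Phi_{21}(I_m+\Phi_{11})^{-1}\Phi_{12}$.

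Next, I would bound each of the four blocks. The weights $\lambda_j^{(G)}$ and $\lambda_j^{*}$ are positive, and both the Gauss rule $G_m$ and the rule $G_{m+1}^{*}$ are exact on constants, so $\sum_j\lambda_j^{(G)}=\sum_j\lambda_j^{*}=\beta_0$. Combined with the definitions of the blocks, this yields, setting $\rho:=\beta_0\|k\|_\infty$,
$$
\|\Phi_{ij}\|_\infty\leq\theta_m^{(j)}\rho,\qquad i,j\in\{1,2\}.
$$
Hypothesis \eqref{kinfty} says $\rho<1$, so the Neumann series gives $\|(I_m+\Phi_{11})^{-1}\|_\infty\leq(1-\theta_m^{(1)}\rho)^{-1}$ and $\|(I_{m+1}+\Phi_{22})^{-1}\|_\infty\leq(1-\theta_m^{(2)}\rho)^{-1}$. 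Multiplying these estimates together yields
$$
\|M\|_\infty\leq\frac{\theta_m^{(1)}\theta_m^{(2)}\rho^2}{(1-\theta_m^{(1)}\rho)(1-\theta_m^{(2)}\rho)}.
$$

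The step I expect to require the most care is the final algebraic reduction. Using $\theta_m^{(1)}+\theta_m^{(2)}=1$, direct expansion yields the identity $(1-\theta_m^{(1)}\rho)(1-\theta_m^{(2)}\rho)-\theta_m^{(1)}\theta_m^{(2)}\rho^2=1-\rho$, which is strictly positive by hypothesis and independent of $m$. This implies $\|M\|_\infty<1$ uniformly in $m$, so $\bm{e}_c^{(k)}\to 0$ geometrically, and then $\bm{e}_b^{(k+1)}\to 0$ via the first relation above. Since Theorem~\ref{teo:1} ensures that \eqref{phisys1} has a unique solution for $m$ large enough, the iterates converge to the values given in \eqref{itersol}.
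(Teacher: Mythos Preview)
Your proposal is correct and follows essentially the same route as the paper: set up the block Gauss--Seidel error recursion, bound each $\|\Phi_{ij}\|_\infty$ using positivity of the quadrature weights together with $\sum_j\lambda_j^{(G)}=\sum_j\lambda_j^{*}=\beta_0$, control the inverses by Neumann series, and conclude that the composite error map is a contraction in $\|\cdot\|_\infty$.

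The only difference lies in the final algebraic step. The paper takes the cruder route of showing $\|\Phi_{ij}\|_\infty<\tfrac12$ for all $i,j$ (which is where ``$m$ sufficiently large'' enters, since one needs $\theta_m^{(j)}$ close enough to $\tfrac12$) and then uses $\dfrac{\|\Phi_{12}\|_\infty}{1-\|\Phi_{11}\|_\infty}<1$ for each factor separately. Your use of the identity $(1-\theta_m^{(1)}\rho)(1-\theta_m^{(2)}\rho)-\theta_m^{(1)}\theta_m^{(2)}\rho^2=1-\rho$ is a bit sharper: it yields the contraction bound $\|M\|_\infty\le\dfrac{\theta_m^{(1)}\theta_m^{(2)}\rho^2}{1-\rho+\theta_m^{(1)}\theta_m^{(2)}\rho^2}<1$ for \emph{every} $m$, so that ``$m$ large'' is needed only to invoke Theorem~\ref{teo:1} for existence and uniqueness of the limit, not for the contraction itself.
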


\begin{proof}
Let $\bm{b}\in\RR^m$ and $\bm{c}\in\RR^{m+1}$ denote the solution of \eqref{iter1}. 
Introduce the error vectors $\bm{e}_b^{(k+1)}=\bm{b}^{(k+1)}-\bm{b}$ and 
$\bm{e}_c^{(k)}=\bm{c}^{(k+1)}-\bm{c}$ for $k=0,1,2,\ldots~$. The assumption 
\eqref{kinfty} implies that there exists a constant $\varepsilon>0$ such that the matrix
$\Phi_{11}$ satisfies
$$
\|\Phi_{11}\|_\infty = \theta_m^{(1)} \max_{i=1,2,\ldots,m} 
\sum_{j=1}^m \lambda_j^{(G)}  \vert k(x_j^{(G)},x_i^{(G)}) \vert 
\leq \theta_m^{(1)} \beta_0 \|k\|_\infty < \theta_m^{(1)}-\varepsilon < 
\frac{1}{2},
$$
since the sum of the quadrature weights is $\beta_0$.
Similarly, we obtain
\begin{equation}\label{unmezzo}
\|\Phi_{ij}\|_\infty< \frac{1}{2}, \qquad i,j=1,2.
\end{equation}
Therefore, the matrices $I_m+\Phi_{11}$ and $I_{m+1}+\Phi_{22}$ are invertible
and
$$
\|(I_m+\Phi_{\ell\ell})^{-1}\|_\infty \leq
\frac{1}{1-\|\Phi_{\ell\ell}\|_\infty}, \qquad \ell=1,2.
$$
Combining \eqref{iter1} and \eqref{split}, we obtain   
\[
\bm{e}_b^{(k+1)}=-(I_m+\Phi_{11})^{-1}\Phi_{12}\bm{e}_c^{(k)},\qquad
\bm{e}_c^{(k+1)}=-(I_m+\Phi_{22})^{-1}\Phi_{21}\bm{e}_b^{(k+1)},
\]
which yields
\[
\bm{e}_c^{(k+1)}=(I_m+\Phi_{22})^{-1}\Phi_{21}
(I+\Phi_{11})^{-1}\Phi_{12}\bm{e}_c^{(k)},\quad k=0,1,2,\ldots~.
\]
It follows from \eqref{unmezzo} that
$$
\|(I_m+\Phi_{11})^{-1}\Phi_{12}\|_\infty \leq 
\frac{\|\Phi_{12}\|_\infty}{1-\|\Phi_{11}\|_\infty} <1.
$$
Similarly, $\|(I_m+\Phi_{22})^{-1}\Phi_{21}\|_\infty<1$, and we obtain
\[
\|(I_m+\Phi_{22})^{-1}\Phi_{21}(I+\Phi_{11})^{-1}\Phi_{12}\|\infty <1.
\]
This shows that $\lim_{k\rightarrow\infty}\bm{e}_c^{(k)}=\bm{0}$. The fact that
$\lim_{k\rightarrow\infty}\bm{e}_b^{(k)}=\bm{0}$ can be shown similarly.
\end{proof}

\medskip
\begin{theorem}\label{thm1bis}
Let the kernel of \eqref{Fredholm} satisfy 
$$
\sup_{y\in\cD}\|k(\cdot,y)\|_1 \leq M < 1,
$$
where $M$ is a positive constant.
Then, for a sufficiently large $m$, the iteration process \eqref{iter1}
converges to the unique solution \eqref{itersol} of system
\eqref{phisys1}.
\end{theorem}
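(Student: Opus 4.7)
The plan is to follow the proof of Theorem \ref{thm1} verbatim, replacing the pointwise kernel bound $\|k\|_\infty < \beta_0^{-1}$ by the present $L^1$-hypothesis, and leveraging convergence of the underlying Gauss-type quadrature rules to relate the discrete sums to integrals. The key observation is that, for each fixed $y$, the sum $\sum_j \lambda_j^{(G)} |k(x_j^{(G)}, y)|$ is precisely the $m$-point Gauss rule applied to the continuous integrand $|k(\cdot, y)|$, which for $m$ large is close to $\int_\cD |k(x,y)|\,d\mu(x) = \|k(\cdot, y)\|_1 \leq M$.

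First, I would bound $\|\Phi_{11}\|_\infty = \theta_m^{(1)} \max_i \sum_j \lambda_j^{(G)} |k(x_j^{(G)}, x_i^{(G)})|$ by invoking, for each $\varepsilon > 0$ and sufficiently large $m$, a uniform-in-$y$ convergence of the Gauss quadrature to yield
\[
\sup_i \sum_j \lambda_j^{(G)} |k(x_j^{(G)}, x_i^{(G)})| \leq M + \varepsilon.
\]
Analogous bounds for $\|\Phi_{22}\|_\infty$, $\|\Phi_{12}\|_\infty$, and $\|\Phi_{21}\|_\infty$ follow in the same way, using convergence of the $G^*_{m+1}$ rule (Corollary \ref{corollary}) whenever the weights $\lambda_j^*$ appear. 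Since $\theta_m^{(1)}, \theta_m^{(2)} \to 1/2$ as $m \to \infty$, a fact already exploited in Section \ref{sec3}, one obtains $\|\Phi_{ij}\|_\infty \leq \tfrac{1}{2}(M+\varepsilon) + o(1)$ for $i,j=1,2$; choosing $\varepsilon$ small and $m$ large, the assumption $M < 1$ yields $\|\Phi_{ij}\|_\infty < \tfrac{1}{2}$ strictly.

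From this point on, the argument is a direct replay of Theorem \ref{thm1}: the matrices $I_m + \Phi_{11}$ and $I_{m+1} + \Phi_{22}$ are invertible with $\|(I + \Phi_{\ell\ell})^{-1}\|_\infty \leq (1 - \|\Phi_{\ell\ell}\|_\infty)^{-1}$, and the iteration matrix $(I_{m+1} + \Phi_{22})^{-1}\Phi_{21}(I_m + \Phi_{11})^{-1}\Phi_{12}$ satisfies
\[
\left\|(I_{m+1}+\Phi_{22})^{-1}\Phi_{21}(I_m+\Phi_{11})^{-1}\Phi_{12}\right\|_\infty \leq \frac{\|\Phi_{21}\|_\infty \|\Phi_{12}\|_\infty}{(1 - \|\Phi_{11}\|_\infty)(1 - \|\Phi_{22}\|_\infty)} \leq \left( \frac{(M+\varepsilon)/2}{1 - (M+\varepsilon)/2} \right)^{\!2} + o(1),
\]
which is strictly less than $1$ precisely because $M < 1$. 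Convergence of the error vectors $\bm{e}_b^{(k)}$ and $\bm{e}_c^{(k)}$ to $\bm{0}$ then follows exactly as in the proof of Theorem \ref{thm1}, establishing convergence of \eqref{iter1} to the unique solution \eqref{itersol} of \eqref{phisys1}.

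The main obstacle is justifying that the convergence $\sum_j \lambda_j^{(G)} |k(x_j^{(G)}, y)| \to \int_\cD |k(x,y)|\, d\mu(x)$ (and its $G^*_{m+1}$-analogue) is uniform in $y$; this is the only step that genuinely departs from the pattern of Theorem \ref{thm1}. It should be obtained by applying Theorem \ref{stability} and Corollary \ref{corollary} with $u \equiv 1$ to the slice $|k(\cdot, y)|$, together with the standing continuity/smoothness assumptions on $k$ in both variables that guarantee a uniform bound on the best polynomial approximation error $E_{2m-1}(|k(\cdot, y)|)$. Once this uniformity is secured, the remaining estimates are a mechanical transcription of Theorem \ref{thm1}.
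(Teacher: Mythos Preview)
Your proposal is correct and follows essentially the same route as the paper: the authors likewise write $\|\Phi_{11}\|_\infty = \theta_m^{(1)} G_m(|k(\cdot,y)|)$, invoke convergence of $G_m$ to bound this by $\theta_m^{(1)}\rho_m M$ with $\rho_m\to 1$, use $\theta_m^{(1)}\to\tfrac12$ to conclude $\|\Phi_{ij}\|_\infty<\tfrac12$, and then refer back to the proof of Theorem~\ref{thm1}. Your $\varepsilon$-formulation is equivalent to their $\rho_m$-formulation, and you are in fact more careful than the paper in flagging the need for uniformity in $y$, which the paper's proof does not address explicitly.
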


\begin{proof}
Proceeding as in the proof of Theorem~\ref{thm1}, we can write
$$
\|\Phi_{11}\|_\infty = \theta_m^{(1)} \max_{i=1,\ldots,m} 
\sum_{j=1}^m \lambda_j^{(G)}  \vert k(x_j^{(G)},x_i^{(G)}) \vert 
= \theta_m^{(1)} G_m( \vert k(\cdot,y) \vert ).
$$
Since $G_m$ is convergent, there are a sequence of constants $\rho_m>0$, $m=1,2,\ldots~$,
which converge to $1$, such that
$$
G_m( \vert k(\cdot,y) \vert ) \leq \rho_m \|k(\cdot,y)\|_1 \leq \rho_m M < 1,
$$
for $m$ sufficiently large. Recalling that $\theta_m^{(1)}$ converges to $\frac{1}{2}$ as 
$m$ increases, we have
$$
\|\Phi_{11}\|_\infty \leq \theta_m^{(1)} \rho_m M < \frac{1}{2}.
$$
The rest of the proof is similar to the proof of Theorem \ref{thm1}. 
\end{proof}

\medskip
The iterations \eqref{iter1} are terminated when two consecutive iterates are 
sufficiently close, that is when
$$
\|\bm{b}^{(k+1)}-\bm{b}^{(k)}\|_2<\tau \quad \text{and} \quad
\|\bm{c}^{(k+1)}-\bm{c}^{(k)}\|_2<\tau, 
$$
for a chosen tolerance $\tau$, or when a prescribed maximum number of iterations has been
carried out. We denote the computed solution by $\bm{b}^{(\rm{iter})}=[b_i^{(\rm{iter})}]$
and $\bm{c}^{(\rm{iter})}=[c_i^{(\rm{iter})}]$. This yields the interpolant
\begin{equation*}
\widehat{f}_{2m+1}^{[3]}(y) = g(y) 
-\theta_m^{(1)} \sum_{j=1}^{m}\lambda_j^{(G)} k(x_j^{(G)},y) 
b_j^{(\rm iter)} 
-\theta_m^{(2)} \sum_{j=1}^{m+1}\lambda_j^* k(x_j^*,y) 
c_j^{(\rm iter)},
\end{equation*}
for any $y\in\cD$.

We next consider the alternative iterative method
\begin{equation}\label{iter2}
\begin{aligned}
(I_m+\Phi_{11}) \bm{b}^{(k+1)} &= \bm{g} - \Phi_{12} \bm{c}^{(k)}, \\
\bm{c}^{(k+1)} &= \bm{g}^* - \Phi_{21}\bm{b}^{(k+1)} - \Phi_{22}\bm{c}^{(k)},
\end{aligned}
\qquad k=0,1,2,\ldots,
\end{equation}
which only requires the LU factorization of the matrix $I_m+\Phi_{11}$. Similarly,
for the method \eqref{iter1}, the initial solution $\bm{c}^{(0)}$ is computed by the
quadrature rule $G_{m+1}^*$. We denote the Nystr\"om interpolant corresponding to the 
above iteration by $\widehat{f}^{[4]}$.

\begin{theorem}\label{thm2}
Under the assumptions of Theorem~\ref{thm1}, the iteration process
\eqref{iter2} converges to the vectors \eqref{itersol}.
\end{theorem}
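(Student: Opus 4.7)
The plan is to follow the template of the proof of Theorem~\ref{thm1}, tracking the one structural difference between \eqref{iter1} and \eqref{iter2}: in \eqref{iter2} the second block is updated by a direct matrix--vector multiplication rather than by a triangular solve. As before, I would let $(\bm{b},\bm{c})$ denote the unique solution \eqref{itersol} of \eqref{split} (existence for $m$ sufficiently large was already established in the proof of Theorem~\ref{thm1} under assumption \eqref{kinfty}), and introduce the error vectors $\bm{e}_b^{(k)}:=\bm{b}^{(k)}-\bm{b}$ and $\bm{e}_c^{(k)}:=\bm{c}^{(k)}-\bm{c}$. Subtracting \eqref{split} from \eqref{iter2} would yield
\begin{align*}
(I_m+\Phi_{11})\,\bm{e}_b^{(k+1)} &= -\Phi_{12}\,\bm{e}_c^{(k)}, \\
\bm{e}_c^{(k+1)} &= -\Phi_{21}\,\bm{e}_b^{(k+1)} - \Phi_{22}\,\bm{e}_c^{(k)},
\end{align*}
and eliminating $\bm{e}_b^{(k+1)}$ reduces the analysis to the single recursion $\bm{e}_c^{(k+1)} = M\,\bm{e}_c^{(k)}$ with
\[
M := \Phi_{21}(I_m+\Phi_{11})^{-1}\Phi_{12} - \Phi_{22}.
\]

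Next I would reuse the bound \eqref{unmezzo} from the proof of Theorem~\ref{thm1}: under assumption \eqref{kinfty} and for $m$ sufficiently large, $\|\Phi_{ij}\|_\infty<\tfrac{1}{2}$ for $i,j=1,2$, so that $q:=\max_{i,j}\|\Phi_{ij}\|_\infty<\tfrac{1}{2}$ and $\|(I_m+\Phi_{11})^{-1}\|_\infty\leq 1/(1-q)$. Submultiplicativity of the infinity norm and the triangle inequality then give
\[
\|M\|_\infty \leq \frac{q^{2}}{1-q} + q = \frac{q}{1-q} < 1,
\]
where the final inequality uses $q<\tfrac{1}{2}$. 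Hence $\bm{e}_c^{(k)}\to\bm{0}$ geometrically, and the first line of the error recursion forces $\bm{e}_b^{(k)}\to\bm{0}$ as well, which is exactly the desired convergence to \eqref{itersol}.

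The main subtlety---and the reason this is not a literal corollary of Theorem~\ref{thm1}---is that the iteration matrix $M$ is now a \emph{difference} rather than a composition of contractive factors: the block $(I_{m+1}+\Phi_{22})^{-1}$ used in \eqref{iter1} has been replaced by $-\Phi_{22}$, which a priori only supplies a bound of order $q$, not $q/(1-q)$. The algebraic cancellation $\tfrac{q^{2}}{1-q}+q=\tfrac{q}{1-q}$ is what restores strict contractivity, and it works precisely because assumption \eqref{kinfty} guarantees the strict inequality $q<\tfrac{1}{2}$; any weaker bound on $\|k\|_\infty$ would push $\|M\|_\infty$ past $1$. Every other ingredient---invertibility of $I_m+\Phi_{11}$, the block-norm bounds, and the fact that the quadrature weights of $G_m$ and $G^*_{m+1}$ each sum to $\beta_0$---is already in place from the proof of Theorem~\ref{thm1}, so no new estimates are required.
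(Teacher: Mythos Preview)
Your proof is correct and follows essentially the same approach as the paper: both derive the error recursion $\bm{e}_c^{(k+1)}=(\Phi_{21}(I_m+\Phi_{11})^{-1}\Phi_{12}-\Phi_{22})\bm{e}_c^{(k)}$ and invoke the bounds $\|\Phi_{ij}\|_\infty<\tfrac12$ established in the proof of Theorem~\ref{thm1}. You supply the explicit estimate $\|M\|_\infty\le q^{2}/(1-q)+q=q/(1-q)<1$ that the paper leaves implicit in its appeal to ``the same arguments,'' so your write-up is in fact slightly more complete.
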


\begin{proof}
The error vectors for the method are
\[
\begin{aligned}
\bm{e}_b^{(k+1)} &= (I_m+\Phi_{11})^{-1}\Phi_{12} \left(
\Phi_{21} \bm{e}_b^{(k)} + \Phi_{22} \bm{e}_c^{(k-1)} \right), \\
\bm{e}_c^{(k+1)} &= \left( \Phi_{21} (I_m+\Phi_{11})^{-1}\Phi_{12} 
-\Phi_{22} \right) \bm{e}_c^{(k)}, 
\end{aligned}
\qquad k=1,2,\ldots~.
\]
The convergence can be easily proved by the same arguments as in the proof of
Theorem \ref{thm1}. 
\end{proof}

The last iterative method we consider is a Richardson-type method. It is
defined by
\begin{equation}\label{iter3}
\begin{aligned}
\bm{b}^{(k+1)} &= \bm{g} - \Phi_{11} \bm{b}^{(k)} -\Phi_{12} \bm{c}^{(k)}, \\
\bm{c}^{(k+1)} &= \bm{g}^* - \Phi_{21} \bm{b}^{(k+1)} 
-\Phi_{22} \bm{c}^{(k)},
\end{aligned}
\qquad k=0,1,2,\ldots~.
\end{equation}
Convergence can be established similarly as above. The LU factorization of a matrix is 
not required in this case, but the convergence is the slowest among the iterative methods
considered. We refer to the Nystr\"om interpolant computed in this manner as
$\widehat{f}^{[5]}$.

\begin{remark}\rm
We note that the iterative methods \eqref{iter1}, \eqref{iter2}, and \eqref{iter3} may be 
implemented by replacing $\bm{b}^{(k+1)}$ by $\bm{b}^{(k)}$ in the right-hand side of 
the second equation of each method. This has the advantage that the two formulas can be 
evaluated simultaneously on a parallel computer, but it slightly decreases the rate of
convergence.
\end{remark}

\section{Numerical examples}\label{examples}
To illustrate the performance of the averaged Nystr\"om interpolants discussed in the 
previous sections, we apply them to the solution of three integral equations with 
different degrees of regularity. The computations were carried out in Matlab R2021b on a 
Debian GNU/Linux computer.

In the tables that report the results, the symbols 
$$
R_m^{(G)},\ \tilde{R}_{m+1},\ R^*_{m+1},\ R_{2m+1}^{(A)},\ R_{2m+1}^{[i]},
\ i=1,2,\ldots,5,
$$
denote the difference in the uniform norm between the exact solution $f$ and the 
Nystr\"om interpolants 
$$
f_m^{(G)},\ \tilde{f}_{m+1},\ f^*_{m+1},\ f_{2m+1}^{(A)},\ f_{2m+1}^{[i]},
\ i=1,2,\ldots,5,
$$
respectively. We approximate the uniform norm by evaluating the error at $10^3$ 
equispaced points in the interval $(-1,1)$.

\begin{figure}[htb]
\begin{minipage}{.49\textwidth}
\begin{center} 
\includegraphics[width=\linewidth]{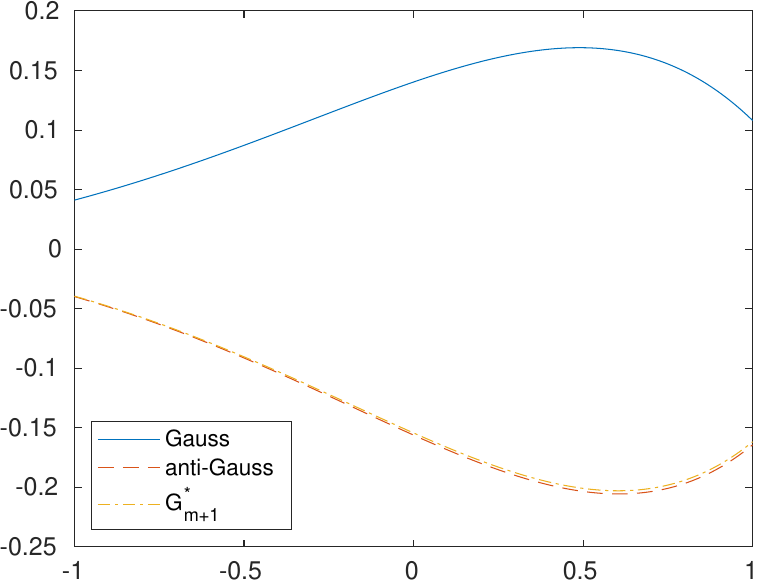}
\end{center}
\end{minipage}
\begin{minipage}{.49\textwidth}
\begin{center} 
\includegraphics[width=\linewidth]{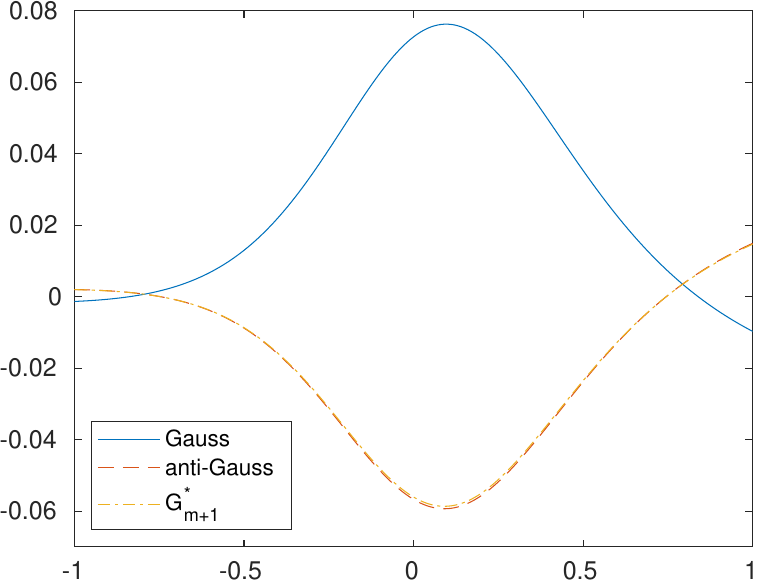}
\end{center}
\end{minipage}
\caption{Errors $f-f_m^{(G)}$ (Gauss rule), $f-\tilde{f}_{m+1}$ (anti-Gauss
rule), and $f-f_{m+1}^*$ ($G^*_{m+1}$ rule), for Example \ref{example1} (left)
and Example \ref{example2} (right) with $m=2$.}
\label{fig:ibmch}
\end{figure}

\begin{example}\label{example1}\rm
We first consider the equation 
\begin{equation}\label{es1}
f(y)+\frac{1}{2}\int_{-1}^1 x e^y\sin (x+y) f(x) \dx =g(y),
\end{equation}
where $g(y)=\frac{1}{32}(8\cos{2}-4\cos{4}-4\sin{2}+\sin{4})e^y\cos
y+\cos(3y)$. The exact solution is $f(y)=\cos{3y}$. 
We set $\alpha=\beta=0$ in the Jacobi weight function \eqref{jw} and
$\gamma=\delta=0$ in the weight $u(x)$ \eqref{weightu} of the function space
$C_u([-1,1])$.

In the graph on the left of Figure~\ref{fig:ibmch}, we plot the difference
between the exact solution $f$ and the approximation produced by the Gauss
rule, the anti-Gauss formula, and the quadrature rule $G^*_{m+1}$ for $m=2$.
We can observe that the errors of the second and third Nystr\"om interpolants are of 
opposite sign as the error in the Nystr\"om interpolant determined by the Gauss rule 
for all $x\in[-1,1]$.

\begin{table}[ht]
\begin{center}
\caption{Approximation errors for Example \ref{example1}.}
\label{es1:errors}
\begin{tabular}{c|cccccc}
\hline
$m$ & $R_m^{(G)}$ & $\tilde{R}_{m+1}$ & $R^*_{m+1}$ & $R_{2m+1}^{(A)}$ & $R_{2m+1}^{[1]}$ & $R_{2m+1}^{[2]}$ \\ 
\hline
   2 & 1.11e-01 & 1.26e-01 & 1.25e-01 & 1.10e-02 & 2.22e-03 & 1.20e-02 \\
   4 & 6.03e-03 & 6.03e-03 & 6.00e-03 & 2.42e-06 & 2.89e-07 & 3.57e-07 \\
   6 & 1.49e-05 & 1.49e-05 & 1.49e-05 & 6.88e-10 & 4.71e-11 & 4.69e-11 \\
   8 & 8.01e-09 & 8.01e-09 & 8.00e-09 & 9.53e-14 & 3.16e-15 & 3.77e-15 \\
  10 & 1.46e-12 & 1.46e-12 & 1.46e-12 & 3.33e-16 & 8.88e-16 & 2.22e-16 \\
\hline
\end{tabular}
\end{center}
\end{table}

Table~\ref{es1:errors} shows the behavior of the Nystr\"om interpolants determined by 
three quadrature rules (Gauss, anti-Gauss, and $G^*_{m+1}$), and compares them to the
interpolants determined by averaged and weighted averaged Gauss rules ($f_{2m+1}^{(A)}$
and $f_{2m+1}^{[1]}$), and the approximation $f_{2m+1}^{[2]}$ of the latter. The number 
of quadrature nodes, $m$, ranges from 2 to 10.

It can be seen that while the simple rules are equivalent, the averaged rules lead to 
improved accuracy, in some cases the improvement is 6 significant decimal digits. 
The weighted averaged rule 
is always more accurate than the averaged rule, except when machine precision is reached, 
where the larger linear system to be solved increases error propagation. The approximated 
interpolant $f_{2m+1}^{[2]}$ \eqref{interp2} is less accurate than $f_{2m+1}^{[1]}$ for 
small $m$, while it is equivalent for larger numbers of collocation points, and is the
most accurate for $m=10$. It can be seen that in the last case, the solution of two linear 
systems of orders $m$ and $m+1$, instead of the solution of one system of order $2m+1$, is 
beneficial both with respect to the complexity and stability of the numerical method.

\begin{table}[ht]
\begin{center}
\caption{Approximation errors for Example \ref{example1}.}
\label{es1:errorsiter}
\begin{tabular}{c|llll}
\hline
$m$ & $R_{2m+1}^{[1]}$ & $R_{2m+1}^{[3]}$ ($N_\text{iter}$) & $R_{2m+1}^{[4]}$ ($N_\text{iter}$) & $R_{2m+1}^{[5]}$ ($N_\text{iter}$) \\ 
\hline
   2 & 2.22e-03 & 2.22e-03 (13) & 2.22e-03 (21) & 2.22e-03 (25) \\ 
   4 & 2.89e-07 & 2.89e-07 (12) & 2.89e-07 (21) & 2.89e-07 (23) \\ 
   6 & 4.71e-11 & 4.71e-11 (10) & 4.71e-11 (17) & 4.71e-11 (20) \\ 
   8 & 3.16e-15 & 3.72e-15  (8) & 3.72e-15 (13) & 3.72e-15 (14) \\ 
  10 & 8.88e-16 & 1.11e-16  (5) & 1.11e-16  (8) & 1.11e-16  (8) \\ 
\hline
\end{tabular}
\end{center}
\end{table}

Table~\ref{es1:errorsiter} compares the weighted averaged interpolant
$f_{2m+1}^{[1]}$ computed by a direct method, to the approximations
$f_{2m+1}^{[i]}$, $i=3,4,5$, obtained by the iterative methods \eqref{iter1},
\eqref{iter2}, and \eqref{iter3}.
We remark that the kernel of \eqref{es1} satisfies the convergence assumption of
Theorem~\ref{thm1bis}, but not that of Theorem~\ref{thm1}.
For each algorithm, we report the uniform norm error and the number of
iterations necessary to reach convergence.
We see that the accuracy achieved by the three iterative methods is equivalent to the
accuracy obtained by direct solution. The iterative methods are more accurate for $m=10$.

In this test, we used a rather small tolerance $\tau=10^{-15}$ to stop the iteration. We
recall that the first iterative method requires two LU factorizations of matrices of 
orders $m$ and $m+1$, the second one just demands one LU factorization of a matrix of
order $m$, while \eqref{iter3} does not require the computation of a factorization.
Since the number of iterations for the second method is less than twice the number 
required by the first method, its computational cost is less. The third method has the 
lowest complexity, as it only involves matrix-vector product evaluations. However, since 
the matrix size is very small, it is not possible to actually measure the computing time.
\end{example}

\begin{example}\label{example2}\rm
The second integral equation is 
\begin{equation}\label{ex2}
f(y)+\int_{-1}^1 \frac{e^{x+y}}{1+x^2+3y^2} f(x)\sqrt[4]{1-x^2}dx  =
 \vert y+1 \vert ^{\frac{3}{2}},
\end{equation}
and the parameters of the Jacobi weight function \eqref{jw} are 
$\alpha=\beta=\frac{1}{4}$, and the space is not weighted, i.e., $\gamma=\delta=0$. The 
exact solution is not available. We therefore consider the Nystr\"om interpolant computed 
by a Gauss rule with $m=512$ the exact solution. The graph on the left of 
Figure~\ref{fig:ibmch} shows that, also in this case, the errors in the Nystr\"om 
interpolant determined by the anti-Gauss and $G^*_{m+1}$ rules with $m=2$ are of opposite
sign as the error in the Nystr\"om interpolant determined by the Gauss rule $G_m$ at all 
point of the interval $(-1,1)$.

\begin{table}[ht]
\begin{center}
\caption{Approximation errors for Example \ref{example2} ($\tau=10^{-15}$,
$\gamma=\delta=0$).}
\label{es2:errorsiter}
\begin{tabular}{c|lllll}
\hline
$m$ & $R_{2m+1}^{[1]}$ & $R_{2m+1}^{[2]}$ & $R_{2m+1}^{[3]}$ ($N_\text{iter}$) & $R_{2m+1}^{[4]}$ ($N_\text{iter}$) & $R_{2m+1}^{[5]}$ ($N_\text{iter}$) \\ 
\hline
   2 & 1.32e-03 & 8.18e-03 & 1.32e-03 (20) & 1.32e-03 (44) & 1.32e-03 (92) \\ 
   4 & 8.82e-06 & 1.33e-04 & 8.82e-06 (19) & 8.82e-06 (36) & 8.82e-06 (83) \\ 
   8 & 4.76e-09 & 2.49e-08 & 4.76e-09 (16) & 4.76e-09 (30) & 4.76e-09 (69) \\ 
  16 & 9.99e-11 & 9.99e-11 & 9.99e-11 (13) & 9.99e-11 (23) & 9.99e-11 (53) \\ 
  32 & 2.44e-12 & 2.44e-12 & 2.44e-12 (12) & 2.44e-12 (20) & 2.44e-12 (43) \\ 
  64 & 5.62e-14 & 5.60e-14 & 5.60e-14 (10) & 5.62e-14 (18) & 5.60e-14 (35) \\ 
 128 & 1.33e-15 & 2.39e-15 & 1.33e-15 (10) & 1.33e-15 (14) & 1.55e-15 (100) \\ 
 256 & 1.55e-15 & 1.01e-15 & 7.77e-16 (7) & 7.77e-16 (10) & 8.88e-16 (20) \\ 
\hline
\end{tabular}
\end{center}
\end{table}

Table~\ref{es2:errorsiter} compares the different algorithms for computing the weighted 
averaged interpolants $f_{2m+1}^{[i]}$, $i=1,2,\ldots,5$. The stop tolerance for the 
iterative methods is $\tau=10^{-15}$, and $\gamma=\delta=0$ for the space $C_u([-1,1])$.
The kernel of \eqref{ex2} does not satisfy the assumptions of Theorems~\ref{thm1} 
and~\ref{thm1bis}, but nevertheless the iterative methods converge, except for in one case.
As the right-hand side of \eqref{ex2} is not smooth, a large value of $m$ is required to 
achieve high accuracy. This is illustrated by the second column. Due to propagation of 
round-off errors, the error in the interpolants $f_{2m+1}^{[1]}$ does not decrease as $m$
becomes  larger than 128. The approximation $f_{2m+1}^{[2]}$ is slightly less accurate 
for $m<16$, but it is about the same as in $f_{2m+1}^{[1]}$ when $m$ is large. We recall that
the evaluation of $f_{2m+1}^{[2]}$ is cheaper than the evaluation of $f_{2m+1}^{[1]}$.

The iterative methods prove to be more stable than the direct approaches, reaching machine 
precision for $m=256$; see the last three columns in Table~\ref{es2:errorsiter}. The 
number of iterations decreases as the size of the problem increases, and the second 
iterative method appears to be more efficient than the first one, as the complexity is 
reduced. The accuracy of the third method is comparable, but the number if iterations 
grows with $m$, and exceeds the maximum number of allowed iterations (100) when $m=128$.

To investigate the influence of the weight in the space $C_u([-1,1])$, we repeat in 
Table~\ref{es2:errorsiterbis} the computations for the larger values of $m$ and
$\gamma=\delta=1.24$ in \eqref{weightu}. By \eqref{abcond}, $\gamma$ and $\delta$ should be less than $5/4$; we choose their value to be slightly smaller than this bound.
We observe that the accuracy is unaltered for the three iterative approaches,
but the number of iterations decreases, showing that the weight $u(x)$ acts as
a preconditioner for the iterative algorithms.
At the same time, the accuracy of the first direct method improves when $m=256$.

\begin{table}[ht]
\begin{center}
\caption{Approximation errors for Example \ref{example2} ($\tau=10^{-15}$,
$\gamma=\delta=1.24$).}
\label{es2:errorsiterbis}
\begin{tabular}{c|lllll}
\hline
$m$ & $R_{2m+1}^{[1]}$ & $R_{2m+1}^{[2]}$ & $R_{2m+1}^{[3]}$ ($N_\text{iter}$) & $R_{2m+1}^{[4]}$ ($N_\text{iter}$) & $R_{2m+1}^{[5]}$ ($N_\text{iter}$) \\ 
\hline
  32 & 1.80e-12 & 1.80e-12 & 1.80e-12 (11) & 1.80e-12 (19) & 1.80e-12 (42) \\ 
  64 & 4.20e-14 & 4.21e-14 & 4.19e-14 (9) & 4.19e-14 (16) & 4.19e-14 (32) \\ 
 128 & 1.33e-15 & 2.25e-15 & 1.55e-15 (7) & 1.33e-15 (12) & 1.33e-15 (24) \\ 
 256 & 7.77e-16 & 1.11e-15 & 7.77e-16 (5) & 8.88e-16 (8) & 8.88e-16 (16) \\ 
\hline
\end{tabular}
\end{center}
\end{table}

\end{example}

\begin{example}\label{example3}\rm
We apply the Nystr\"om method to the integral equation
\[
f(y)+\int_{-1}^1 (y+3) \vert \cos (3+x) \vert ^{\frac{5}{2}} f(x) (1-x)^{-\frac{1}{4}}
(1+x)^{\frac{4}{5}} \dx 
= \ln(1+y^2),
\]
and set $\alpha=-\frac{1}{4}$ and $\beta=\frac{4}{5}$ in the Jacobi weight function 
\eqref{jw}. Moreover, $\gamma=\delta=0$. The kernel does not satisfy the convergence 
conditions. 

\begin{table}[ht]
\begin{center}
\caption{Approximation errors for Example \ref{example3} ($\tau=10^{-12}$).}
\label{es3:errorsiter}
\begin{tabular}{c|llll}
\hline
$m$ & $R_{2m+1}^{[1]}$ & $R_{2m+1}^{[3]}$ ($N_\text{iter}$) & $R_{2m+1}^{[4]}$ ($N_\text{iter}$) & $R_{2m+1}^{[5]}$ ($N_\text{iter}$) \\ 
\hline
   2 & 9.67e-05 & 9.67e-05 (43) & 9.67e-05 (100) & 4.06e+42 (100) \\ 
   4 & 4.97e-08 & 4.97e-08 (38) & 4.97e-08 (77) & 2.68e+41 (100) \\ 
   8 & 9.35e-12 & 9.59e-12 (25) & 8.61e-12 (51) & 8.31e+37 (100) \\ 
  16 & 1.11e-16 & 4.79e-14 (3) & 4.97e-13 (3) & 2.74e+31 (100) \\ 
  32 & 2.22e-16 & 1.39e-16 (3) & 2.22e-16 (3) & 2.22e-16 (2) \\ 
  64 & 1.67e-16 & 1.67e-16 (3) & 1.11e-16 (3) & 2.78e-16 (2) \\ 
\hline
\end{tabular}
\end{center}
\end{table}

Table~\ref{es3:errorsiter} reports the results obtained for the direct and iterative 
methods for computing the weighted averaged interpolant. The first two iterative methods 
exhibit very slow convergence for $m\leq 8$, while they are fast for larger values 
of $m$. The third method diverges for $m\leq 16$, but converges in just 2 iterations for
larger values of $m$. This illustrates that convergence may occur only when $m$ is large
enough. This is not a significant drawback, since iterative methods are mainly intended
for medium and large-scale problems.
\end{example}

\section{Conclusions}\label{sec5}
This paper discusses and compares Nystr\"om interpolants determined by Gauss, averaged 
Gauss, and weighted averaged Gauss quadrature rules with a focus on the latter. 
Stability and accuracy of the Gauss rules used is investigated, and convergence 
of the Nystr\"om interpolants, and of iterative methods for their computation, are
discussed. For many problems the interpolants 
based on averaged Gauss and weighted averaged Gauss rules are shown to perform well. A complete analysis of the iterative methods for a generic weight function $u$ will be the subject of future research.

\section*{Acknowledgements}
Luisa Fermo and Giuseppe Rodriguez are members of INdAM-GNCS group. 
Luisa Fermo and Giuseppe Rodriguez were partially supported by
Fondazione di Sardegna, progetto biennale bando 2021, ``Computational Methods
and Networks in Civil Engineering (COMANCHE)'', and by the INdAM-GNCS 2022
projects ``Computational methods for kernel-based approximation and its
applications'' (LF) and ``Metodi e modelli di regolarizzazione per problemi
malposti di larga scala'' (GR).
The research of M. M.
Spalevi\'c is supported in part by the Serbian Ministry of  Science,
Technological Development, and Innovations, according to Contract
451-03-47/2023-01/200105  dated on 3 February, 2023.
The research also has been accomplished within RITA (Research Italian network
on Approximation).


\end{document}